\title[Category $\mathcal{J}$ Modules for Divergence Zero Vector Fields]{Classification of Category $\mathcal{J}$ Modules for Divergence Zero Vector Fields on a Torus}
\author{Yuly Billig and John Talboom}
\newtheorem{thm}{Theorem}[section]
\newtheorem{lem}[thm]{Lemma}
\newtheorem{prop}[thm]{Proposition}
\newtheorem{defn}[thm]{Definition}
\newtheorem*{theorem*}{Theorem}
\begin{document}
\begin{abstract}
We consider a category of modules that admit compatible actions of the commutative algebra of Laurent polynomials and the Lie algebra of divergence zero vector fields on a torus and have a weight decomposition with finite dimensional weight spaces. We classify indecomposable and irreducible modules in this category.
%
\end{abstract}
\maketitle
Key Words: \keywords{Indecomposable representations; Irreducible representations; Lie algebra of vector fields; Weight module}

2010 Mathematics Subject Classification: \subjclass{17B10, 17B66} 
\section{Introduction}
Consider the algebra $A_N=\mathbb{C}[t_1^{\pm1},\dots,t_{N}^{\pm1}]$ and Lie algebra $\text{Der}(A_N)$ of derivations of $A_N$. 
 The Lie algebra $\text{Der}(A_N)$ may be identified with the Lie algebra of polynomial vector fields on an $N$-dimensional torus (see Section 2).
 In \cite{R2} Eswara Rao considered modules that admit compatible actions of both the Lie algebra
$\text{Der}(A_N)$ and the commutative algebra $A_N$. We refer to such modules as $(A_N,\text{Der}(A_N))$-modules.
Tensor fields on a torus provide examples of modules in this class.
Eswara Rao classified in \cite{R2} all irreducible $(A_N,\text{Der}(A_N))$-modules with finite-dimensional weight spaces
and proved that all such modules are in fact tensor modules. This result was extended in \cite{B} to a classification of indecomposable modules in this category. To accomplish this it was shown that the action of the Lie algebra is polynomial 
(see \cite{BB} and \cite{BZ}); a strategy which will be used in the current paper.

In \cite{R1} Eswara Rao determines conditions for irreducibility of tensor modules for $\text{Der}(A_N)$. 
Restrictions of these tensor modules to the subalgebra of divergence zero vector fields, denoted here by $\mathcal{S}_N$, are studied in \cite{T}, and it was found that these modules remain irreducible under similar conditions.
The goal of this paper is to study the category $\mathcal{J}$ of $(A_N, \mathcal{S}_N)$-modules with finite-dimensional weight spaces and classify irreducible and indecomposable modules in this category. 

Let $S_N^+$ be the Lie algebra of divergence zero elements of $\text{Der}(\mathbb{C}[x_1,\dots,x_N])$ of non-negative degrees, $\mathcal{H}$ the three dimensional Heisenberg algebra and $\mathfrak{a}_N$ an abelian algebra of dimension $N$. The main result of this paper is Theorem \ref{classification} which is stated below (the action of $\mathcal{S}_N$ will also be given).

\begin{theorem*}
Let $\lambda\in\mathbb{C}^N$ and let $\mathcal{J}_{\lambda}$ be the subcategory of modules in category $\mathcal{J}$ supported on $\lambda+\mathbb{Z}^N$. 
\begin{enumerate}
\item[(a)]
For $N=2$ there is an equivalence of categories between the category of finite dimensional modules for $S_2^+\oplus\mathcal{H}$ and $\mathcal{J}_{\lambda}$. This equivalence maps $U$ to $A_2\otimes U$ where $U$ is a finite dimensional module for $S_2^+\oplus\mathcal{H}$. 
\item[(b)]
For $N\geq 3$, there is an equivalence of categories between the category of finite dimensional modules for $S_N^+\oplus\mathfrak{a}_N$ and $\mathcal{J}_{\lambda}$. This equivalence maps $U$ to $A_N\otimes U$ where $U$ is a finite dimensional module for $S_N^+\oplus\mathfrak{a}_N$.
\end{enumerate}
\end{theorem*}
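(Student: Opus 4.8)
The plan is to realize the claimed equivalence through an explicit pair of adjoint functors and show they are mutually quasi-inverse. In one direction we use the tensor-module construction $U\mapsto A_N\otimes U$ named in the statement; in the other direction we send a module $M\in\mathcal{J}_\lambda$ to the single weight space $U=M_\lambda$, equipped with a ``fiber action'' of the auxiliary Lie algebra to be identified below. Once both functors are defined, proving the equivalence amounts to exhibiting natural isomorphisms $M\cong A_N\otimes M_\lambda$ and $(A_N\otimes U)_\lambda\cong U$ compatible with the respective actions, together with the (routine) verification that each functor is exact and faithful on morphisms.

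First we exploit the compatible $A_N$-action. For $M\in\mathcal{J}_\lambda$ the multiplication operators $t^r\colon M_\alpha\to M_{\alpha+r}$ are invertible with inverse $t^{-r}$, so every weight space is identified with $U=M_\lambda$ and $M\cong A_N\otimes U$ as an $A_N$-module. Transporting the $\mathcal{S}_N$-action through this identification, each divergence-free field $t^r(u\cdot d)$ with $u\cdot r=0$ becomes a linear map $X^u_r(\alpha)\in\operatorname{End}(U)$ depending on the weight $\alpha\in\lambda+\mathbb{Z}^N$, and the bracket relation
\[
[\,t^r(u\cdot d),\,t^s(v\cdot d)\,]=t^{r+s}\big((u\cdot s)(v\cdot d)-(v\cdot r)(u\cdot d)\big)
\]
translates into a family of quadratic identities among the operators $X^u_r(\alpha)$ (with appropriately shifted arguments). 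A module in $\mathcal{J}_\lambda$ is thus completely encoded by this family, and the problem is reduced to classifying such families subject to the quadratic relations and the finiteness of $\dim U$.

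The crux of the argument, and the step I expect to be the main obstacle, is to prove that $\alpha\mapsto X^u_r(\alpha)$ is the restriction of a \emph{polynomial} map $\mathbb{C}^N\to\operatorname{End}(U)$ whose degree is bounded independently of $r$. Here I would follow the strategy flagged in the introduction, adapting \cite{BB,BZ,B}: combining the finite-dimensionality of the weight spaces with the quadratic relations above forces the matrix coefficients of $X^u_r(\alpha)$ to satisfy enough linear difference equations in $\alpha$ to guarantee polynomial dependence, while the interplay with the $A_N$-action produces the uniform degree bound. The genuinely delicate point in the divergence-free setting is exactly this uniformity: at each fixed $r$ only the hyperplane $u\cdot r=0$ of fields is available, so there are strictly fewer brackets to exploit than in the full $\operatorname{Der}(A_N)$ situation of \cite{B}, and the degree estimate must be reorganized to use the remaining ones.

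With polynomiality established, one reads off the auxiliary Lie algebra from the symbol expansion of $X^u_r(\alpha)$ about a base point. The linear (in the symbol) divergence-free part assembles into $\mathfrak{sl}_N$, the higher Taylor data into the positive-degree part of $S_N^+$, and the remaining constant operators into the lowest-degree fields. For $N\ge 3$ these constant operators mutually commute and commute with $S_N^+$, contributing an abelian $\mathfrak{a}_N$ and the direct-sum decomposition $S_N^+\oplus\mathfrak{a}_N$; for $N=2$ the two analogous operators fail to commute and their bracket produces a nonzero central operator, so the low-degree relations assemble instead into the three-dimensional Heisenberg algebra $\mathcal{H}$, which is precisely the source of the dichotomy in the statement. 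Since $U=M_\lambda$ is finite-dimensional by hypothesis, and since the positive-degree part of $S_N^+$ is a positively graded ideal and hence acts nilpotently, $U$ is a finite-dimensional module over $S_N^+\oplus\mathcal{H}$ (resp. $S_N^+\oplus\mathfrak{a}_N$). Conversely, given any such finite-dimensional $U$, the same polynomial formulas define operators $X^u_r(\alpha)$ on $A_N\otimes U$; I would then verify that the quadratic relations (equivalently, the Jacobi identity for $\mathcal{S}_N$) hold, that the weight spaces are finite-dimensional copies of $U$, and that the module lies in $\mathcal{J}_\lambda$. Checking that the two constructions are mutually quasi-inverse and compatible with morphisms then yields the asserted equivalence of categories.
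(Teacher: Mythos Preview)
Your overall architecture (the functor $U\mapsto A_N\otimes U$ and its putative inverse $M\mapsto M_\lambda$) is the same as the paper's, but you have misidentified the key technical step, and as written the proposal has a genuine gap.

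Once you invoke the $A_N$-action together with the Leibniz rule (J3), the weight dependence you call $\alpha$ is \emph{automatically} affine: writing $D_{ab}(r)=t^{-r}\circ d_{ab}(r)\big|_{U}$, one has
\[
d_{ab}(r)(t^s\otimes v)=(r_bs_a-r_as_b)\,t^{r+s}\otimes v+t^{r+s}\otimes D_{ab}(r)v,
\]
so your $X^u_r(\alpha)$ is a scalar linear in $\alpha$ plus an operator $D_{ab}(r)$ that does not depend on $\alpha$ at all. Thus ``polynomiality in $\alpha$ with degree bounded independently of $r$'' is trivially true and carries no information; it cannot produce the auxiliary Lie algebra. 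What is actually needed, and what occupies all of Section~3, is that $r\mapsto D_{ab}(r)$ is an $\operatorname{End}(U)$-valued \emph{polynomial in $r$}. Only then can one expand $D_{ab}(r)=\sum_k \frac{r^k}{k!}P_{ab}^{(k)}$ and recognise the coefficients $P_{ab}^{(k)}$ as a representation of $S_N^+$ (with the $|k|\le 1$ pieces giving $\mathcal{H}$ or $\mathfrak{a}_N$). Your proposal never addresses polynomiality in $r$, and the ``difference equations in $\alpha$'' you allude to do not produce it.

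The paper's mechanism for this step is specific and does not fall out of the general \cite{BB,BZ,B} framework by citation alone: one shows (Lemma~3.1) that the iterated difference derivatives $\partial_a^m\partial_b^n D_{ab}(e_a)$ are eigenvectors for $\mathrm{ad}(D_{ab}(-e_b))\,\mathrm{ad}(D_{ab}(-e_a))$ with eigenvalues $-n(m+1)$, and then uses finite-dimensionality of $U$ (Lemma~3.2) to force all but finitely many of these to vanish. This gives polynomiality of $D_{ab}$ on a quadrant, which is then propagated to all of $\mathbb{Z}^N$ by automorphisms and bracket identities; the $N=2$ case is genuinely exceptional because $D_{12}(0)=0$ forces a $\delta_{r,0}$ correction and the surviving constant term $P_{12}^{(0)}$ is exactly the Heisenberg central element. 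Your sketch correctly anticipates the $N=2$/$N\ge 3$ dichotomy at the level of the low-degree operators, but without the $r$-polynomiality argument there is nothing to expand, and the identification of $S_N^+\oplus\mathcal{H}$ (resp.\ $S_N^+\oplus\mathfrak{a}_N$) cannot get started.
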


Section 2 of the current paper will introduce category $\mathcal{J}$ for the Lie algebra of divergence zero vector fields on an $N$-dimensional torus. An immediate consequence of this definition is that weights for an indecomposable module $J$ in category $\mathcal{J}$ form a single coset $\lambda+\mathbb{Z}^N$ where $\lambda\in\mathbb{C}^N$. Furthermore, since $J$ is a free $A_N$-module of finite rank, all weight spaces have the same dimension and it follows that $J\cong A_N\otimes U$, where $U$ is any weight space of $J$.

Section 3 contains the bulk of the proof for the classification of category $\mathcal{J}$. It begins by showing that the action of $\mathcal{S}_N$ on $J$ is completely determined by the action of a certain Lie algebra on $U$. The remainder of the section is to show that $\mathcal{S}_N$ acts on $J$ by certain $\text{End}(U)$-valued polynomials. It is seen however that the case $N=2$ is exceptional in this regard and it must be considered separately from the cases where $N\geq 3$.

The main results are presented in Sections 4 and 5. Here the so-called polynomial action on $U$ is seen to be a representation of the Lie algebra $S_N^+$, along with the three dimensional Heisenberg in the case $N=2$, or an abelian algebra in the case $N\geq3$. In the case of irreducible modules the action of $S_N^+$ simplifies to a representation of $\mathfrak{sl}_N$, the Lie algebra of $N\times N$ matrices with trace zero over $\mathbb{C}$, and the three dimensional Heisenberg converts to an abelian algebra when $N=2$. 

Irreducible representations for the case $N=2$ are studied in \cite{JL} by Jiang and Lin. Lemma \ref{eigenvector} below makes use of a technique found in this paper in order to obtain a family of eigenvectors. This provides a crucial step in the classification.
\section{Preliminaries}
Let $A_N=\mathbb{C}[t_1^{\pm1},\dots,t_{N}^{\pm1}]$ be the algebra of Laurent polynomials over $\mathbb{C}$. Elements of $A_N$ are presented with multi-index notation $t^r=t_1^{r_1}\dots t_{N}^{r_{N}}$ where $r=(r_1,\dots,r_{N})\in\mathbb{Z}^{N}$. Let $\{e_1,\dots,e_N\}$ denote the standard basis for $\mathbb{Z}^N$. For $k\in\mathbb{Z}^N	$, $|k|=k_1+\dots+k_N$, $k!=k_1!\dots k_N!$ and $\binom{r}{k}=\frac{r!}{k!(r-k)!}$. Denote the set of non-negative integers by $\mathbb{Z}_{\geq0}$.

For $i\in\{1,\dots,N\}$, let $d_i=t_i\frac{\partial}{\partial t_i}$. The vector space of derivations of $A_N$, $\text{Der}(A_N)=\text{Span}_{\mathbb{C}}\left\{t^rd_i|i\in\{1,\dots,N\}, r\in\mathbb{Z}^{N}\right\}$, forms a Lie algebra called the Witt algebra denoted here by $\mathcal{W}_N$. The Lie bracket in $\mathcal{W}_N$ is given by $[t^rd_i,t^sd_j]=s_it^{r+s}d_j-r_jt^{r+s}d_i$.

Geometrically, $\mathcal{W}_N$ may be interpreted as the Lie algebra of (complex-valued) polynomial vector fields on an $N$ dimensional torus via the mapping $t_j=e^{\sqrt{-1}\theta_j}$ for all $j\in\{1,\dots,N\}$, where $\theta_j$ is the $j$th angular coordinate. This has an interesting subalgebra, the Lie algebra of divergence-zero vector fields, denoted $\mathcal{S}_N$.

The change of coordinates $t_j=e^{\sqrt{-1}\theta_j}$, gives $\frac{\partial}{\partial \theta_j}=\frac{\partial t_j}{\partial \theta_j}\cdot\frac{\partial}{\partial t_j}=\sqrt{-1} t_j\frac{\partial}{\partial t_j}=\sqrt{-1} d_j$. Thus an element $X=\sum_{j=1}^{N}f_j(t)d_j\in \mathcal{W}_N$ can be written in the form $X=-\sqrt{-1}\sum_{j=1}^{N}f_j(t)\frac{\partial}{\partial \theta_j}$. The divergence of $X$ with respect to the natural volume form in angular coordinates is then $-\sqrt{-1}\sum_{j=1}^{N}\frac{\partial f_j}{\partial \theta_j}=\sum_{j=1}^{N}t_j\frac{\partial f_j}{\partial t_j}$. Letting $d_{ab}(r)=r_{b}t^rd_a-r_at^rd_{b}$, it follows that 
\begin{equation*}
\mathcal{S}_N=\text{Span}_{\mathbb{C}}\left\{d_a,d_{ab}(r)|a,b\in\{1,\dots,N\}, r\in\mathbb{Z}^{N}\right\}
\end{equation*}
and has commutative Cartan subalgebra $\mathfrak{h}=\text{Span}_{\mathbb{C}}\{d_j|j\in\{1,\dots,N\}\}$. It will be useful to have the Lie bracket of $\mathcal{S}_N$ in terms of the elements $ d_{ab}(r)$. For $r,s\in\mathbb{Z}^N$ and $a,b,p,q\in\{1,\dots,N\}$, $[d_{a},d_{pq}(r)]=r_ad_{pq}(r),$ and
\begin{multline*}
[d_{ab}(r),d_{pq}(s)]\\=r_bs_pd_{aq}(r+s)-r_bs_qd_{ap}(r+s)-r_as_pd_{bq}(r+s)+r_as_qd_{bp}(r+s).
\end{multline*}
By definition $d_{ab}(0)=0,d_{aa}(r)=0$ and $d_{ba}(r)=-d_{ab}(r)$. When $N\geq3$, $d_{ab}(r)=0$ in the case $r_a=r_b=0$, and in general,
\begin{equation*}
r_pd_{ab}(r)+r_ad_{bp}(r)+r_bd_{pa}(r)=0.
\end{equation*}

A family of modules for $\mathcal{W}_N$ called category $\mathcal{J}$ was defined in \cite{B}. An analogous category of modules for $\mathcal{S}_N$ is defined as follows:
\begin{defn}\label{CatJ}
Let $N>1$. An $\mathcal{S}_N$-module $J$ belongs to category $\mathcal{J}$ if the following properties hold:
\begin{enumerate}
\item[(J1)] The action of $d_a$ on $J$ is diagonalizable for all $a\in\{1,\dots,N\}$.
\item[(J2)] Module $J$ is a free $A_N$-module of finite rank.
\item[(J3)] For any $X\in\mathcal{S}_N,f\in A_N$ and $u\in J$, $X(fu)=(X(f))u+f(Xu).$
\end{enumerate}
\end{defn}
A submodule of any $J\in\mathcal{J}$ must be invariant under the actions of both $A_N$ and $\mathcal{S}_N$. Classifying the modules of category $\mathcal{J}$ is the goal of this paper. From property (J2) it follows that any module in $\mathcal{J}$ is a finite direct sum of indecomposable modules and hence it suffices to examine indecomposable  modules $J\in\mathcal{J}$. Using (J1) we may consider the $\mathfrak{h}$-weight decomposition, $J=\bigoplus_{\lambda\in\mathbb{C}^N}J_{\lambda}$ where $J_{\lambda}=\{u\in J|d_a(u)=\lambda_au\}$. For $u\in J_{\lambda}$,
\begin{equation*}
d_a(d_{bc}(r)u)=d_{bc}(r)d_au+[d_a,d_{bc}(r)]u=(\lambda_a+r_a)d_{bc}(r)u
\end{equation*}
and thus $d_{bc}(r)J_{\lambda}\subset J_{\lambda+r}$. Similarly by (J3) $t^rJ_{\lambda}\subset J_{\lambda+r}$. These two relations partition the weights of $J$ into $\mathbb{Z}^N$-cosets of $\mathbb{C}^N$, and decompose $J$ into a direct sum of submodules, each corresponding to a distinct coset. Thus if $J$ is indecomposable its set of weights is one such coset $\lambda+\mathbb{Z}^N$ for $\lambda\in\mathbb{C}^N$ and $J=\bigoplus_{r\in\mathbb{Z}^N}J_{\lambda+r}$. For a fixed $\lambda\in\mathbb{C}^N$, $\mathcal{J}_{\lambda}$ shall denote the subcategory of $\mathcal{J}$ supported on $\lambda+\mathbb{Z}^N$ (i.e. the subcategory of $\mathcal{J}$ consisting of all $\mathcal{S}_N$-modules whose weights are of the form $\lambda+\mathbb{Z}^N$). From now on assume $J\in\mathcal{J}_{\lambda}$.

Let $U=J_{\lambda}$. The invertible map $t^r:U\rightarrow J_{\lambda+r}$ identifies all weight spaces with $U$ and since $J$ is a free module for the associative algebra $A_N$ it follows that any basis for $U$ is also basis for $J$ viewed as a free $A_N$-module. Furthermore the finite rank condition of property (J2) implies that $U$ must be finite dimensional. This yields that $J\cong A_N\otimes U$. Homogeneous elements of $J$ will be denoted $t^s\otimes v$, for $s\in\mathbb{Z}^N,v\in U$.
\section{Polynomial Action}
The map $d_{ab}(r):1\otimes U\rightarrow t^r\otimes U$ induces an endomorphism $D_{ab}(r):U\rightarrow U$ defined by
\begin{equation*}
D_{ab}(r)u=(t^{-r}\circ d_{ab}(r))u
\end{equation*}
for $u\in U$. Combining this with (J3) yields
\begin{equation}\label{SNactiononJ}
d_{ab}(r)(t^s\otimes v)=(r_bs_a-s_br_a)t^{r+s}\otimes v + t^{r+s}\otimes D_{ab}(r)v,
\end{equation}
and so the action of $d_{ab}(r)$ on $J$ is determined by that of $D_{ab}(r)$ on $U$.

The key to proving the main result is to show that $D_{ab}(r)$ acts on $U$ by an $\text{End}(U)$-valued polynomial in $r$ when $N\geq 3$. In this case it is said that $D_{ab}(r)$ has \emph{polynomial action}. That is, 
\begin{equation*}
D_{ab}(r)=\sum_{k\in\mathbb{Z}_{\geq0}^N}\frac{r^k}{k!}P^{(k)}_{ab},
\end{equation*}
where the $P^{(k)}_{ab}\in\text{End}(U)$ do not depend on $r$, and the sum is finite. The factor of $k!$ is there for convenience. In the case that $N=2$ a slight modification needs to be made and the corresponding expansion has the form
\begin{equation*}
D_{ab}(r)=\sum_{k\in\mathbb{Z}_{\geq0}^2}\frac{r^k}{k!}P^{(k)}_{ab}-\delta_{r,0}P^{(0)}_{ab},
\end{equation*}
where $\delta_{r,0}$ is the Kronecker delta.

Since $D_{ab}(r)=t^{-r}d_{ab}(r)$, the Lie bracket for $D_{ab}(r)$ follows from that of $d_{ab}(r)$.
\begin{align*}
&\quad\;[D_{ab}(r),D_{cd}(s)]\\
&=[t^{-r}d_{ab}(r),t^{-s}d_{cd}(s)]\\
&=t^{-r}(d_{ab}(r)(t^{-s}))d_{cd}(s)-t^{-s}(d_{cd}(s)(t^{-r}))d_{ab}(r)+t^{-r-s}[d_{ab}(r),d_{cd}(s)]\\
&=t^{-r}(-r_bs_a+r_as_b)t^{r-s}d_{cd}(s)-t^{-s}(-r_cs_d+r_ds_c)t^{s-r}d_{ab}(r)\\
&\quad+t^{-r-s}\left(r_bs_cd_{ad}(r+s)-r_bs_dd_{ac}(r+s)-r_as_cd_{bd}(r+s)+r_as_dd_{bc}(r+s)\right)\\
&=(r_as_b-r_bs_a)D_{cd}(s)+(r_cs_d-r_ds_c)D_{ab}(r)\\
&\quad+r_bs_cD_{ad}(r+s)-r_bs_dD_{ac}(r+s)-r_as_cD_{bd}(r+s)+r_as_dD_{bc}(r+s).
\end{align*}
This has special case
\begin{equation}
\label{bracket}
[D_{ab}(r),D_{ab}(s)]=(r_as_b-r_bs_a)(D_{ab}(r)+D_{ab}(s)-D_{ab}(r+s)).
\end{equation}
Note that for $N\geq3$
\begin{equation*}
r_cD_{ab}(r)+r_aD_{bc}(r)+r_bD_{ca}(r)=0.
\end{equation*}

For a function $f$ whose domain is $\mathbb{Z}^N$ the \emph{difference derivative} in direction $r\in\mathbb{Z}^N$, denoted by $\partial_rf$, is defined as
\begin{equation*}
\partial_rf(s)=f(s+r)-f(s).
\end{equation*}
Higher order derivatives are obtained by iteration and thus
\begin{equation}\label{higherorderderiv}
\partial_r^mf(s)=\sum_{i=0}^m(-1)^{m-i}\binom{m}{i}f(s+ir).
\end{equation}
To simplify notation let $\partial_a=\partial_{e_a}$. Applying the above twice yields
\begin{equation}\label{mixedpartialderiv}
\partial_{a}^m\partial_{b}^nf(s)=\sum_{i=0}^m\sum_{j=0}^n(-1)^{m+n-i-j}\binom{m}{i}\binom{n}{j}f(s+ie_a+je_b).
\end{equation}

A technique for finding eigenvectors was found in \cite{JL} and provides a key step to proving the result here (cf. Lemma 4 in \cite{JL}).
\begin{lem}\label{eigenvector}
Let $m,n\in\mathbb{Z}_{\geq0}$, and $a,b\in\{1,\dots,N\}$. Then for $m\geq1$
\begin{equation*}
[D_{ab}(-e_b),[D_{ab}(-e_a),\partial_{a}^m\partial_{b}^nD_{ab}(e_a)]]=-n(m+1)\partial_{a}^m\partial_{b}^nD_{ab}(e_a).
\end{equation*}
\end{lem}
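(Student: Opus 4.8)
The case $a=b$ is trivial since $D_{aa}(r)=0$, so assume $a\neq b$. My plan is to reduce the entire double commutator to the relation \eqref{bracket} together with two elementary binomial identities, by tracking everything through the ``coefficient function'' of a finite linear combination of the $D_{ab}(s)$. First I would expand $E:=\partial_a^m\partial_b^nD_{ab}(e_a)$ via \eqref{mixedpartialderiv}, applied to $f(s)=D_{ab}(s)$ and evaluated at $s=e_a$, writing it as a finite sum $E=\sum_s\phi(s)D_{ab}(s)$ whose coefficient function factors as $\phi\bigl((i+1)e_a+je_b\bigr)=\alpha(i)\beta(j)$ with $\alpha(i)=(-1)^{m-i}\binom{m}{i}$ and $\beta(j)=(-1)^{n-j}\binom{n}{j}$ (and $\phi$ vanishing off this sublattice).

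The central observation is how $\mathrm{ad}_{D_{ab}(-e_a)}$ and $\mathrm{ad}_{D_{ab}(-e_b)}$ act on such sums. Specializing \eqref{bracket} to $r=-e_a$ gives $[D_{ab}(-e_a),D_{ab}(s)]=-s_b\bigl(D_{ab}(-e_a)+D_{ab}(s)-D_{ab}(s-e_a)\bigr)$. Applying this termwise to $E$, the coefficient of the constant term $D_{ab}(-e_a)$ is $-\sum_s s_b\phi(s)$, which factors as $\bigl(\sum_i\alpha(i)\bigr)\bigl(\sum_j j\beta(j)\bigr)$; since $\sum_i(-1)^{m-i}\binom{m}{i}=0$ for $m\geq1$, this vanishes. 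This is precisely where the hypothesis $m\geq1$ enters. After reindexing $s\mapsto s+e_a$ in the $D_{ab}(s-e_a)$ terms, the surviving part shows that, on coefficient functions, $\mathrm{ad}_{D_{ab}(-e_a)}$ acts by $\phi\mapsto s_b\bigl(\phi(s+e_a)-\phi(s)\bigr)$. An identical computation with $r=-e_b$ shows $\mathrm{ad}_{D_{ab}(-e_b)}$ acts by $\chi\mapsto -s_a\bigl(\chi(s+e_b)-\chi(s)\bigr)$, again provided its constant term $\sum_s s_a\chi(s)$ vanishes; I would verify this vanishing for $\chi(s)=s_b\bigl(\phi(s+e_a)-\phi(s)\bigr)$ by the same reindexing, which collapses it once more to $\sum_s s_b\phi(s)=0$.

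Composing the two actions reduces the double commutator to computing $-s_a\Delta_b\bigl(s_b\Delta_a\phi\bigr)$, where $\Delta_c\psi(s)=\psi(s+e_c)-\psi(s)$. Because $\phi$ factors as $\alpha(i)\beta(j)$, this separates into the two one-variable identities
\[
p\bigl(\alpha(p)-\alpha(p-1)\bigr)=-(m+1)\,\alpha(p-1),\qquad (q+1)\beta(q+1)-q\beta(q)=-n\,\beta(q),
\]
each of which follows from Pascal's rule $\binom{m}{p}+\binom{m}{p-1}=\binom{m+1}{p}$ and the absorption identities $p\binom{m+1}{p}=(m+1)\binom{m}{p-1}$ and $(q+1)\binom{n}{q+1}=(n-q)\binom{n}{q}$. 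Multiplying the two results, the two minus signs combine with the outer factor $-s_a$ to yield $-n(m+1)\phi$, that is $-n(m+1)E$, as claimed.

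The only genuine obstacle is bookkeeping: keeping the reindexing shifts and the three competing signs straight, and confirming at each stage that the constant terms $D_{ab}(-e_a)$ and $D_{ab}(-e_b)$ drop out, so that the clean difference-operator description of each adjoint action is legitimate. (No issue arises from possible linear relations among the $D_{ab}(s)$, since throughout I am merely rearranging one fixed element produced by \eqref{bracket} and linearity.) Both vanishings trace back to the single fact $\sum_i(-1)^{m-i}\binom{m}{i}=0$, valid exactly because $m\geq1$.
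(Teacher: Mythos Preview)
Your argument is correct and follows essentially the same route as the paper's proof: both expand $\partial_a^m\partial_b^nD_{ab}(e_a)$ via \eqref{mixedpartialderiv}, apply the special bracket \eqref{bracket} twice, use $\sum_i(-1)^{m-i}\binom{m}{i}=0$ for $m\geq1$ to kill the stray $D_{ab}(-e_a)$ and $D_{ab}(-e_b)$ terms, and finish with the same absorption identities for binomial coefficients. Your coefficient-function/difference-operator packaging is a tidy reorganisation of the paper's explicit double-sum manipulations, but not a genuinely different method.
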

\begin{proof}
First apply the formula for the difference derivatives,
\begin{align*}
&[D_{ab}(-e_b),[D_{ab}(-e_a),\partial_{a}^m\partial_{b}^nD_{ab}(e_a)]\\
&=\sum_{i=0}^m\sum_{j=0}^n(-1)^{m+n-i-j}\binom{m}{i}\binom{n}{j}[D_{ab}(-e_b),[D_{ab}(-e_a),D_{ab}((i+1)e_a+je_b)],
\end{align*}
then evaluate the Lie bracket for $D_{ab}(r)$,
\begin{align*}
&=-\sum_{i=0}^m\sum_{j=0}^n(-1)^{m+n-i-j}\binom{m}{i}\binom{n}{j}j[D_{ab}(-e_b),D_{ab}(-e_a)]\\
&\quad-\sum_{i=0}^m\sum_{j=0}^n(-1)^{m+n-i-j}\binom{m}{i}\binom{n}{j}j[D_{ab}(-e_b),D_{ab}((i+1)e_a+je_b)]\\
&\quad+\sum_{i=0}^m\sum_{j=0}^n(-1)^{m+n-i-j}\binom{m}{i}\binom{n}{j}j[D_{ab}(-e_b),D_{ab}(ie_a+je_b))].
\end{align*}
Simplifying the binomial coefficients and evaluating the Lie bracket yields,
\begin{align*}
&=-n\sum_{j=1}^n(-1)^{n-j}\binom{n-1}{j-1}\left(\sum_{i=0}^m(-1)^{m-i}\binom{m}{i}\right)[D_{ab}(-e_b),D_{ab}(-e_a)]\\
&\quad-n\sum_{i=0}^m\sum_{j=1}^n(-1)^{m+n-i-j}\binom{m}{i}\binom{n-1}{j-1}\\
&\quad\times(i+1)(D_{ab}(-e_b)+D_{ab}((i+1)e_a+je_b)-D_{ab}((i+1)e_a+(j-1)e_b))\\
&\quad+n\sum_{i=0}^m\sum_{j=1}^n(-1)^{m+n-i-j}\binom{m}{i}\binom{n-1}{j-1}\\
&\quad\times i(D_{ab}(-e_b)+D_{ab}(ie_a+je_b)-D_{ab}(ie_a+(j-1)e_b)).
\end{align*}
The first term vanishes because the sum in parentheses is zero for $m\geq1$. For a similar reason the terms involving  $D_{ab}(-e_b)$ will vanish. A change of summation index causes a sign change leaving,
\begin{align*}
&n\sum_{i=0}^m\sum_{j=0}^{n-1}(-1)^{m+n-i-j}\binom{m}{i}\binom{n-1}{j}\\
&\times(D_{ab}((i+1)e_a+(j+1)e_b)-D_{ab}((i+1)e_a+je_b))\\
&-mn\sum_{i=0}^{m-1}\sum_{j=0}^{n-1}(-1)^{m+n-i-j}\binom{m-1}{i}\binom{n-1}{j}\\
&\times(D_{ab}((i+2)e_a+(j+1)e_b)-D_{ab}((i+2)e_a+je_b))\\
&+mn\sum_{i=0}^{m-1}\sum_{j=0}^{n-1}(-1)^{m+n-i-j}\binom{m-1}{i}\binom{n-1}{j}\\
&\times(D_{ab}((i+1)e_a+(j+1)e_b)-D_{ab}((i+1)e_a+je_b)).
\end{align*}
Applying the definition of the difference derivative combines terms to give
\begin{align*}
&\quad n\sum_{i=0}^{m}\sum_{j=0}^{n-1}(-1)^{m+n-i-j}\binom{m}{i}\binom{n-1}{j}\partial_{b}D_{ab}((i+1)e_a+je_b)\\
&\quad-mn\sum_{i=0}^{m-1}\sum_{j=0}^{n-1}(-1)^{m+n-i-j}\binom{m-1}{i}\binom{n-1}{j}\\
&\quad\times(\partial_{b}D_{ab}((i+2)e_a+je_b)-\partial_{b}D_{ab}((i+1)e_a+je_b))\\
&=-n\partial_{a}^m\partial_{b}^{n-1}\left(\partial_{b}D_{ab}(e_a)\right)\\
&\quad-mn\sum_{i=0}^{m-1}\sum_{j=0}^{n-1}(-1)^{m+n-i-j}\binom{m-1}{i}\binom{n-1}{j}\partial_{a}\partial_{b}D_{ab}((i+1)e_a+je_b)\\
&=-n\partial_{a}^m\partial_{b}^nD_{ab}(e_a)-mn\partial_{a}^{m-1}\partial_{b}^{n-1}\left(\partial_{a}\partial_{b}D_{ab}(e_a)\right)\\
&=-n(m+1)\partial_{a}^m\partial_{b}^nD_{ab}(e_a).
\end{align*}
\end{proof}
The lemma above shows that for various $m$ and $n$, $\partial_{a}^m\partial_{b}^nD_{ab}(e_a)$ are eigenvectors for $\text{ad}(D_{ab}(-e_b))\text{ad}(D_{ab}(-e_a))$ and yield infinitely many distinct eigenvalues. This fact will be used to show that $\partial_{a}^m\partial_{b}^nD_{ab}(e_a)=0$ for large enough values of $m$ and $n$.

The following lemma was proven in \cite{B} and is presented here without proof.
\begin{lem}[\cite{B} Lemma 3.4]\label{finiteeigenvals}
Let $\mathfrak{L}$ be a Lie algebra with nonzero elements $y, y_1$, $y_2,\dots$ with the property that
\[[y,y_i]=\alpha_iy_i\]
for $i=1,2,\dots$, and $\alpha_i\in\mathbb{C}$. Then for a finite dimensional representation $(U,\rho)$ of $\mathfrak{L}$, there are at most $(\dim U)^2-\dim U+1$ distinct eigenvalues for which $\rho(y_i)\neq 0$.
\end{lem}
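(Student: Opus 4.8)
The plan is to push the eigenvalue relations through the representation $\rho$ and reduce the whole question to counting the spectrum of a single linear operator. First I would apply $\rho$, which is a Lie algebra homomorphism, to the given identity $[y,y_i]=\alpha_i y_i$. This yields $[\rho(y),\rho(y_i)]=\alpha_i\rho(y_i)$ in $\operatorname{End}(U)$, i.e. $T\big(\rho(y_i)\big)=\alpha_i\rho(y_i)$ where $T:=\operatorname{ad}(\rho(y))$ is the adjoint operator on the finite-dimensional space $\operatorname{End}(U)$. The crucial observation is then immediate: whenever $\rho(y_i)\neq 0$, the scalar $\alpha_i$ is an honest eigenvalue of $T$, with $\rho(y_i)$ an eigenvector. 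So the distinct values among the $\alpha_i$ with $\rho(y_i)\neq 0$ form a subset of the spectrum of $T$, and it suffices to bound $|\operatorname{spec}(T)|$.

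The heart of the matter is to determine $\operatorname{spec}(T)$. Writing $n=\dim U$ and letting $\mu_1,\dots,\mu_n$ be the eigenvalues of $\rho(y)$ listed with multiplicity, I would establish that the eigenvalues of $T=\operatorname{ad}(\rho(y))$ are exactly the pairwise differences $\mu_p-\mu_q$ for $1\le p,q\le n$. The clean way is to decompose $T=L-R$, where $L$ and $R$ are left and right multiplication by $\rho(y)$ on $\operatorname{End}(U)$; these two operators commute, $L$ has eigenvalues $\{\mu_p\}$ and $R$ has eigenvalues $\{\mu_q\}$, so a simultaneous triangularization shows the eigenvalues of $L-R$ are the diagonal differences $\mu_p-\mu_q$. (Equivalently, via the Jordan--Chevalley decomposition $\rho(y)=D+M$ with $D$ semisimple, $M$ nilpotent and $[D,M]=0$, one gets $T=\operatorname{ad}(D)+\operatorname{ad}(M)$ as the Jordan--Chevalley decomposition of $T$, with $\operatorname{ad}(D)$ carrying eigenvectors $E_{pq}$ of eigenvalue $\mu_p-\mu_q$.)

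Finally I would count the distinct differences. The ordered pairs $(p,q)$ with $p=q$ all contribute the single value $0$, and the remaining $n^2-n$ ordered pairs contribute at most $n^2-n$ further values, so $T$ has at most $(n^2-n)+1=(\dim U)^2-\dim U+1$ distinct eigenvalues. Since every $\alpha_i$ with $\rho(y_i)\neq 0$ lies in $\operatorname{spec}(T)$, at most $(\dim U)^2-\dim U+1$ distinct such values occur, which is the assertion.

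Everything except the spectral description of $T$ is formal, so the only real step is the identification of $\operatorname{spec}\big(\operatorname{ad}(\rho(y))\big)$ with the set of differences $\{\mu_p-\mu_q\}$; that is where I would be careful, chiefly because $\rho(y)$ need not be diagonalizable. The commuting left/right multiplication argument (or Jordan--Chevalley) handles the non-semisimple case without extra work, which is why I would use it rather than naively triangularizing $\rho(y)$ and reading off $T$ in the matrix-unit basis.
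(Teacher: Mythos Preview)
Your argument is correct. Note, however, that the paper does not actually supply a proof of this lemma: it cites \cite{B} and states the result ``without proof.'' So there is no in-paper proof to compare against. Your approach---pushing the bracket through $\rho$ so that each nonzero $\rho(y_i)$ becomes an eigenvector of $T=\operatorname{ad}(\rho(y))$ on $\operatorname{End}(U)$, and then bounding $|\operatorname{spec}(T)|$ by counting the differences $\mu_p-\mu_q$---is the standard one and is essentially what appears in \cite{B}. The only place requiring care, as you flag, is that $\rho(y)$ need not be semisimple; your commuting left/right multiplication (or Jordan--Chevalley) argument handles this cleanly. The final count is also right: the $n$ diagonal pairs all give $0$, and the $n^2-n$ off-diagonal pairs give at most $n^2-n$ further values, for a total of at most $(\dim U)^2-\dim U+1$.
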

Now consider the case $N=2$ where $\mathcal{S}_2=\text{Span}_{\mathbb{C}}\left\{d_1(r),d_2(r),d_{12}(r)|r\in\mathbb{Z}^{2}\right\}$. Combining the two lemmas above shows that there exists $K\in\mathbb{N}$ such that both $\partial_1^{m+1} \partial_2^n D_{12} (e_1) = 0$ and $\partial_1^m\partial_2^{n+1}D_{12}(e_2)=0$ for all $m+n> K$. This fact along with the following lemmas will show that $D_{12}(r)$ is a polynomial plus a delta function.
\begin{lem}\label{df=0forallmngeq0}
If $\partial_1^m\partial_2^nf(r)=0$ for all $m,n\geq 0$ then $f(r+ie_1+je_2)=0$ for all $i,j\geq0$.
\end{lem}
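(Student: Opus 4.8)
The plan is to fix the point $r$ and prove the conclusion by induction on $i+j$, reading formula (\ref{mixedpartialderiv}) as a triangular relation that expresses the ``top'' value $f(r+ie_1+je_2)$ in terms of the mixed difference derivative $\partial_1^i\partial_2^n f(r)$ together with values $f(r+i'e_1+j'e_2)$ at strictly smaller indices. Throughout I use only the directions $e_1$ and $e_2$, which for $N=2$ are the only standard ones, so (\ref{mixedpartialderiv}) applies verbatim.

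For the base case $i=j=0$ the hypothesis with $m=n=0$ gives $f(r)=\partial_1^0\partial_2^0 f(r)=0$. For the inductive step, fix $i,j\ge 0$ with $i+j\ge 1$ and assume $f(r+i'e_1+j'e_2)=0$ for every pair $(i',j')$ with $i',j'\ge 0$ and $i'+j'<i+j$. Applying (\ref{mixedpartialderiv}) with $a=1$, $b=2$, $m=i$, $n=j$, $s=r$, and using that the hypothesis kills the left-hand side, I obtain
\begin{equation*}
0=\sum_{i'=0}^i\sum_{j'=0}^j(-1)^{i+j-i'-j'}\binom{i}{i'}\binom{j}{j'}f(r+i'e_1+j'e_2).
\end{equation*}
The summand indexed by $(i',j')=(i,j)$ has coefficient $1$ and equals $f(r+ie_1+je_2)$; every other admissible pair satisfies $i'\le i$ and $j'\le j$ with at least one strict inequality, hence $i'+j'<i+j$, so by the inductive hypothesis all remaining summands vanish. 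Therefore $f(r+ie_1+je_2)=0$, completing the induction.

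The argument is elementary, and the only point requiring care is the observation that in the double sum the diagonal index $(i',j')=(i,j)$ is the unique term with maximal index sum $i+j$, so that every other term falls under the inductive hypothesis; there is no genuine obstacle beyond keeping track of the index ranges. One could instead invoke the binomial inversion $f(r+ie_1+je_2)=\sum_{m=0}^i\sum_{n=0}^j\binom{i}{m}\binom{j}{n}\partial_1^m\partial_2^n f(r)$, but the induction has the advantage of not requiring a separate verification of that identity.
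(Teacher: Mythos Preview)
Your proof is correct. The paper's argument is organized slightly differently: it first establishes the one-variable statement (if $\partial_a^m f(r)=0$ for all $m\ge 0$ then $f(r+ie_a)=0$ for all $i\ge 0$) by the same triangular-sum idea, and then iterates---applying the one-variable result to $g_n(r)=\partial_2^n f(r)$ to kill the $e_1$-direction, and then again in the $e_2$-direction. Your single induction on $i+j$ using the two-variable formula~(\ref{mixedpartialderiv}) directly is a bit more economical, since it avoids isolating and re-proving the one-variable case; the paper's version, on the other hand, makes the reduction to dimension one explicit. Both are the same elementary idea and neither offers a real advantage over the other.
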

\begin{proof}
Using (\ref{higherorderderiv}) it follows by induction that if $\partial_a^mf(r)=0$ for all $m\geq0$ then $f(r+ie_a)=0$ for all $i\geq 0$. 

Suppose $\partial_1^m\partial_2^nf(r)=0$ for all $m,n\geq 0$, and let $g_{n}(r)=\partial_2^nf(r)$ so that by assumption, for each $n\geq 0$, $\partial_1^mg_{n}(r)=0$ for all $m\geq 0$. By the first part of the proof this implies that $g_{n}(r+ie_1)=0$ for all $i\geq 0$. So for any $i\geq0$, $\partial_2^nf(r+ie_1)=0$ for all $n\geq 0$, which implies that $f(r+ie_1+je_s)=0$ for all $i,j\geq 0$.
\end{proof}
For $K+1$ ordered pairs $(x_i,a_i)$, $i=0,\dots,K$ with distinct $x_i$, there exists a unique interpolating polynomial $P(X)$ of degree at most $K$, such that $P(x_i)=a_i$. This can be extended to functions of two variables in the following way.
\begin{lem}\label{deg2pol}
Given $\frac{(K+1)(K+2)}{2}$ triples $(x_i,y_j,a_{ij})$ for $0\leq i+j\leq K$,  with distinct $x_i$ and distinct $y_j$, there exists a unique polynomial $P(X,Y)$ of degree at most $K$ such that $P(x_i,y_j)=a_{ij}$.
\end{lem}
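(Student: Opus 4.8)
The plan is to recast this as a linear algebra statement and to derive existence from uniqueness. The space $\mathcal{P}_K$ of polynomials $P(X,Y)$ of total degree at most $K$ has basis $\{X^mY^l : m,l\geq 0,\ m+l\leq K\}$, hence dimension $\frac{(K+1)(K+2)}{2}$, which is exactly the number of prescribed values $a_{ij}$. The evaluation map $\mathcal{P}_K\to\mathbb{C}^{(K+1)(K+2)/2}$ sending $P\mapsto(P(x_i,y_j))_{i+j\leq K}$ is linear between spaces of equal finite dimension. It therefore suffices to prove that this map is injective (uniqueness); surjectivity, and hence existence, then follows automatically.

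To prove injectivity I would argue by induction on $K$ that the only $P\in\mathcal{P}_K$ with $P(x_i,y_j)=0$ for all $0\leq i+j\leq K$ is $P=0$. The base case $K=0$ is immediate, since $\mathcal{P}_0=\mathbb{C}$ and the single condition forces $P=0$. For the inductive step, I first restrict attention to the row $j=0$: the one-variable polynomial $X\mapsto P(X,y_0)$ has degree at most $K$ and vanishes at the $K+1$ distinct nodes $x_0,\dots,x_K$, so by the one-variable interpolation fact recalled just before the lemma it is identically zero. Viewing $P$ as an element of $\mathbb{C}[X][Y]$ and substituting $Y=y_0$, this says $(Y-y_0)$ divides $P$; writing $P=(Y-y_0)P_1$, additivity of total degree in the integral domain $\mathbb{C}[X,Y]$ gives $P_1\in\mathcal{P}_{K-1}$.

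I would then feed the remaining conditions into $P_1$. For $j\geq 1$ with $i+j\leq K$ we have $0=P(x_i,y_j)=(y_j-y_0)P_1(x_i,y_j)$, and since the $y_j$ are distinct we may cancel $y_j-y_0\neq 0$ to obtain $P_1(x_i,y_j)=0$. Re-indexing with $j'=j-1$, this states that $P_1$ vanishes at $(x_i,y_{j'+1})$ for all $i+j'\leq K-1$, that is, $P_1$ vanishes on the triangular array built from the $K$ distinct nodes $x_0,\dots,x_{K-1}$ and the $K$ distinct nodes $y_1,\dots,y_K$. The inductive hypothesis applied to $P_1\in\mathcal{P}_{K-1}$ yields $P_1=0$, whence $P=0$, completing the induction. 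The only point requiring care is the bookkeeping of the shifted $y$-indices together with the verification that the nodes surviving after factoring out $(Y-y_0)$ are still distinct; this holds because distinctness of $\{y_0,\dots,y_K\}$ is inherited by any subset, so no genuine obstacle arises beyond organizing the reduction cleanly.
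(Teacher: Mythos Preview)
Your proof is correct. Both your argument and the paper's use induction on $K$, the one-variable interpolation along the row $j=0$, and the factorization through $(Y-y_0)$ to drop the degree; the re-indexing you worry about is handled cleanly.

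The organization differs, though. The paper argues \emph{constructively}: it first builds the univariate interpolant $R(X)$ through the $j=0$ data, forms the modified data $b_{ij}=(a_{ij}-R(x_i))/(y_j-y_0)$ for $j\geq 1$, invokes the inductive hypothesis to get $Q(X,Y)$ of degree at most $K-1$ through the $b_{ij}$, and sets $P=R+(Y-y_0)Q$; uniqueness is then deduced from the uniqueness of the decomposition $T=F(X)+(Y-y_0)G(X,Y)$. You instead invoke the dimension count $\dim\mathcal{P}_K=\frac{(K+1)(K+2)}{2}$ to reduce existence to uniqueness, and then prove only the homogeneous statement (trivial kernel) by the same factor-and-reduce induction. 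Your route is a bit slicker since it avoids ever writing down the interpolant, while the paper's version has the advantage of being explicitly constructive and not relying on the equal-dimension linear algebra trick.
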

\begin{proof}
For $K=0$ the constant function $P(X,Y)=a_{00}$ is the unique polynomial of degree 0 through $(x_0,y_0,a_{00})$. Proceed by induction on $K$. The univariate case yields a unique polynomial $R(X)$ of degree at most $K$ such that $R(x_i)=a_{i0}$ for all $i\in\{0,\dots,K\}$. For $i\geq0$ and $j\geq1$ let
\begin{equation*}
b_{ij}=\frac{a_{ij}-R(x_i)}{y_j-y_0}.
\end{equation*}
By induction there is a unique interpolating polynomial $Q(X,Y)$ of degree at most $K-1$ such that $Q(x_i,y_j)=b_{ij}$ for the $\frac{K(K+1)}{2}$ triples $(x_i,y_j,b_{ij})$ where $1\leq i+j\leq K$, and $j\geq1$. Polynomial $P(X,Y)=R(X)+(Y-y_0)Q(X,Y)$ is of degree at most $K$ and $P(x_i,y_j)=a_{ij}$ for $0\leq i+j\leq K$. 

Suppose $T(X,Y)$ is a polynomial of degree at most $K$ and $T(x_i,y_i)=a_{ij}$ for $0\leq i+j\leq K$. Since the decomposition $T(X,Y)=F(X)+(Y-y_0)G(X,Y)$ is unique for polynomials $F$ and $G$, it must be that $F(X)=R(X)$ and $G(X,Y)=Q(X,Y)$. Hence $P(X,Y)$ is unique. 
\end{proof}
\begin{lem}\label{equalpolynomials}
Let $S=S_1\times\dots\times S_N\in\mathbb{C}^N$, where each $S_i$ is a set with $K+1$ elements, and let $F$ and $G$ be polynomials of degree at most $K$ in $N$ variables, $X_1,\dots,X_N$, that agree on $S$. Then $F=G$.
\end{lem}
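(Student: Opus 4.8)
The plan is to set $H = F - G$, so that $H$ is a polynomial of degree at most $K$ that vanishes on the grid $S = S_1 \times \cdots \times S_N$, and to prove $H = 0$ by induction on $N$. This immediately yields $F = G$. Notice that this is the natural $N$-variable generalization of the uniqueness assertions underlying Lemma \ref{deg2pol}, and the inductive scheme runs along the same lines.

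For the base case $N = 1$, the polynomial $H$ has degree at most $K$ and vanishes at the $K+1$ distinct points of $S_1$; since a nonzero univariate polynomial of degree at most $K$ has at most $K$ roots, $H = 0$.

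For the inductive step, I would regard $H$ as a polynomial in the last variable with coefficients in $\mathbb{C}[X_1, \dots, X_{N-1}]$, writing $H = \sum_{d=0}^{K} c_d(X_1, \dots, X_{N-1}) X_N^d$. Because $H$ has total degree at most $K$, each coefficient $c_d$ has degree at most $K - d \le K$. Now fix an arbitrary point $(s_1, \dots, s_{N-1})$ in the subgrid $S_1 \times \cdots \times S_{N-1}$; then $H(s_1, \dots, s_{N-1}, X_N)$ is a univariate polynomial in $X_N$ of degree at most $K$ vanishing at all $K+1$ points of $S_N$, hence identically zero. Reading off its coefficients gives $c_d(s_1, \dots, s_{N-1}) = 0$ for every $d$. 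Since $(s_1, \dots, s_{N-1})$ was an arbitrary point of $S_1 \times \cdots \times S_{N-1}$, each $c_d$ is a polynomial in $N-1$ variables of degree at most $K$ vanishing on a grid whose factors each have $K+1$ elements. The inductive hypothesis then forces $c_d = 0$ for all $d$, whence $H = 0$.

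The only point requiring care—and the nearest thing to an obstacle—is the degree bookkeeping: one must verify that the total-degree bound on $H$ descends to a degree bound of at most $K$ on each coefficient $c_d$, so that the inductive hypothesis is genuinely applicable. This is immediate, since $c_d$ collects the monomials of $H$ carrying $X_N^d$, each of which contributes degree $d$ from $X_N$ and therefore degree at most $K - d$ from the remaining variables. No real difficulty arises beyond this observation.
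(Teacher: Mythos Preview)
Your proof is correct. Both you and the paper argue by induction on $N$ with the same base case, but the inductive reductions differ. The paper picks $a\in S_1$, performs polynomial division by $(X_1-a)$ to write $F=(X_1-a)P+R$ and $G=(X_1-a)Q+T$, then applies the lemma first to the remainders $R,T$ (in $N-1$ variables on the grid $S_2\times\cdots\times S_N$) and then to the quotients $P,Q$ (degree at most $K-1$ on the smaller cube $(S_1\setminus\{a\})\times S_2\times\cdots\times S_N$); this is effectively a nested induction on $N$ and on $K$. Your argument instead expands $H=F-G$ in powers of $X_N$ and uses the univariate base case pointwise to kill each coefficient $c_d$ on the $(N-1)$-dimensional grid, requiring only a single induction on $N$. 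Your route is slightly more streamlined and avoids the secondary induction on the degree; the paper's division-based approach is closer in spirit to the interpolation argument of Lemma~\ref{deg2pol} but is no shorter.
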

\begin{proof} Use induction on $N$ where the case $N=1$ is well known (the case $N=2$ follows from Lemma \ref{deg2pol}). Let $a\in S_1$ and divide $F(X)$ and $G(X)$ by $(X_1-a)$ to get $F(X)=(X_1-a)P(X)+R(X_2,\dots,X_N)$ and $G(X)=(X_1-a)Q(X)+T(X_2,\dots,X_N)$. Then $R$ and $T$ are of degree at most $K$, and agree on $S_2\times\dots\times S_N$. By induction $R=T$ and so $P(x)=Q(x)$ for all $x\in S'=(S_1\setminus\{a\})\times S_2\times\dots\times S_N$. Then $P=Q$ also by induction, since $P$ and $Q$ are of degree at most $K-1$ and $S'$ contains a cube of size $K$. Therefore $F=G$.
\end{proof}
\begin{lem}\label{polynomialonquadrant}
Let $r\in\mathbb{Z}^2$. Suppose $\partial_1^m\partial_2^nf(r)=0$ for all $m+n> K$, for some $K\in\mathbb{N}$. Let $p(t)$ be the bivariate interpolating polynomial of degree at most $K$ such that $p(r+ie_1+je_2)=f(r+ie_1+je_2)$ for $0\leq i+j\leq K$. Then $f(s)=p(s)$ for all $s_1\geq r_1,s_2\geq r_2$. 
\end{lem}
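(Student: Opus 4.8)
The plan is to reduce everything to Lemma \ref{df=0forallmngeq0} by passing to the difference $g = f - p$. Since $p$ is built to agree with $f$ at the $\binom{K+2}{2}$ interpolation points $r + ie_1 + je_2$ with $0 \le i + j \le K$, the function $g$ vanishes on that triangular array. The claim $f(s) = p(s)$ on the quadrant is exactly $g(r + ie_1 + je_2) = 0$ for all $i, j \ge 0$, and by Lemma \ref{df=0forallmngeq0} this follows once I check that $\partial_1^m \partial_2^n g(r) = 0$ for every $m, n \ge 0$.

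To verify this I would split into two ranges according to $m + n$. For $m + n \le K$, formula (\ref{mixedpartialderiv}) writes $\partial_1^m \partial_2^n g(r)$ as a signed combination of the values $g(r + ie_1 + je_2)$ with $0 \le i \le m$ and $0 \le j \le n$; every such index satisfies $i + j \le m + n \le K$, so each term already vanishes on the triangle and the whole expression is zero. For $m + n > K$, I would instead use $\partial_1^m \partial_2^n g(r) = \partial_1^m \partial_2^n f(r) - \partial_1^m \partial_2^n p(r)$, where the first term is zero by hypothesis and the second is zero because $p$ has degree at most $K$.

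The step carrying the real content is the assertion that $\partial_1^m \partial_2^n p \equiv 0$ whenever $m + n > K$, for a polynomial $p$ of degree at most $K$. This is where I expect to spend the care: each operator $\partial_a$ lowers total degree by at least one, so $\partial_1^m \partial_2^n$ applied to a monomial $X_1^a X_2^b$ with $a + b \le K$ yields $0$ as soon as $m > a$ or $n > b$, while in the remaining case $m + n \le a + b \le K$ contradicts $m + n > K$; hence every monomial of $p$, and therefore $p$ itself, is annihilated. Granting this degree fact, both ranges give $\partial_1^m \partial_2^n g(r) = 0$ for all $m, n \ge 0$, so Lemma \ref{df=0forallmngeq0} forces $g$ to vanish on the entire quadrant, which is precisely $f(s) = p(s)$ for all $s_1 \ge r_1$ and $s_2 \ge r_2$.
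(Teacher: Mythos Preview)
Your proposal is correct and follows essentially the same approach as the paper: define the difference $g=f-p$, verify that all mixed difference derivatives $\partial_1^m\partial_2^n g(r)$ vanish by splitting into the cases $m+n\le K$ (use the interpolation data via formula~(\ref{mixedpartialderiv})) and $m+n>K$ (use the hypothesis on $f$ and the degree bound on $p$), then invoke Lemma~\ref{df=0forallmngeq0}. The only difference is that you supply an explicit justification for why $\partial_1^m\partial_2^n p=0$ when $m+n>K$, which the paper simply asserts.
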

\begin{proof}
Let $h(t)=f(t)-p(t)$. Then (\ref{mixedpartialderiv}) implies that $\partial_1^m\partial_2^nh(r)=0$ for $m+n\leq K$, because $h(r+ie_1+je_2)=0$ for $i+j\leq K$. When $m+n>K$, $\partial_1^m\partial_2^nf(r)=0$ by assumption and $\partial_1^m\partial_2^np(r)=0$ since it is a polynomial of degree at most $K$. Thus
$\partial_1^m\partial_2^nh(r)=0$ for $m+n\geq 0$ and so by Lemma \ref{df=0forallmngeq0}, $f(r+ie_1+je_2)=p(r+ie_1+je_2)$ for all $i,j\geq 0$.
\end{proof}
\begin{prop}\label{N=2case}
Let $N=2$ and let $J=A_2\otimes U$ be a module in category $\mathcal{J}$. Then $D_{12}(r)$ acts on $U$ by
\begin{equation*}
D_{12}(r)=\sum_{k\in\mathbb{Z}_{\geq 0}^2}\frac{r^k}{k!}P^{(k)}_{12}-\delta_{r,0}P_{12}^{(0)}
\end{equation*}
for all $r\in\mathbb{Z}^2$, where $P^{(k)}_{12}\in\text{\emph{End}}(U)$ does not depend on $r$, and the summation is finite.
\end{prop}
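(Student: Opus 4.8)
The plan is to determine $D_{12}$ on the closed first quadrant by the difference-derivative method and then transport the answer to the rest of $\mathbb{Z}^2$ using the bracket \eqref{bracket}. First I would take as input the vanishing recorded in the text following Lemma~\ref{finiteeigenvals}: combining Lemmas~\ref{eigenvector} and~\ref{finiteeigenvals} produces $K$ with $\partial_1^{m+1}\partial_2^nD_{12}(e_1)=0$ and $\partial_1^m\partial_2^{n+1}D_{12}(e_2)=0$ for $m+n>K$. Writing the shift in direction $e_a$ as $1+\partial_a$ and expanding $(1+\partial_1)^k(1+\partial_2)^l$ with $k,l\ge0$, each relation propagates from its base point to a whole quadrant of base points, giving $\partial_1^m\partial_2^nD_{12}(s)=0$ for all $m+n>K+1$ whenever $s_1\ge1,\ s_2\ge0$ or $s_1\ge0,\ s_2\ge1$. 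Lemma~\ref{polynomialonquadrant} then yields a polynomial of degree at most $K+1$ equal to $D_{12}$ on each of $\{s_1\ge1,s_2\ge0\}$ and $\{s_1\ge0,s_2\ge1\}$; since these agree on the two-dimensional overlap $\{s_1\ge1,s_2\ge1\}$, Lemma~\ref{equalpolynomials} identifies them as one polynomial $Q(r)=\sum_{k\in\mathbb{Z}_{\ge0}^2}\frac{r^k}{k!}P^{(k)}_{12}$, so that $D_{12}(r)=Q(r)$ on the closed first quadrant with the origin deleted.

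Next I would leave the quadrant. Because $D_{12}$ satisfies \eqref{bracket} and equals $Q$ on the Zariski-dense first quadrant, $Q$ obeys \eqref{bracket} as an identity of polynomials. Put $E(r)=D_{12}(r)-Q(r)$; subtracting the two instances of \eqref{bracket} and using $E(r)=0$ for $r$ in the first quadrant gives $(r_1s_2-r_2s_1)(E(s)-E(r+s))=[Q(r),E(s)]$. Taking $r$ large enough that $r+s$ is also in the first quadrant kills $E(r+s)$, and reading $[Q(r),E(s)]=(r_1s_2-r_2s_1)E(s)$ as a polynomial identity in $r$ and comparing coefficients gives $[P^{(e_1)}_{12},E(s)]=s_2E(s)$, $[P^{(e_2)}_{12},E(s)]=-s_1E(s)$, and $[P^{(k)}_{12},E(s)]=0$ for every other $k$. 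Thus $E(s)\ne0$ forces $s_2$ to be an eigenvalue of $\mathrm{ad}\,P^{(e_1)}_{12}$ and $-s_1$ an eigenvalue of $\mathrm{ad}\,P^{(e_2)}_{12}$, so $E$ is supported on a finite set.

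Finally, the coefficient $r_1s_2-r_2s_1$ of \eqref{bracket} is only rescaled by $\pm1$ under each reflection $r\mapsto-r$, $r\mapsto(-r_1,r_2)$, $r\mapsto(r_1,-r_2)$, so the three reflected functions satisfy the same bracket relation and Lemma~\ref{eigenvector} applies to each verbatim. Repeating the first paragraph for them shows that $D_{12}$ agrees with a polynomial on each of the remaining three closed quadrants (with the origin deleted); on every such quadrant $E$ is then the restriction of a polynomial, and, being finitely supported, it vanishes there. Hence $D_{12}=Q$ on each closed quadrant minus the origin, which exhausts $\mathbb{Z}^2\setminus\{0\}$, while at the origin $D_{12}(0)=0$ gives $E(0)=-Q(0)=-P^{(0)}_{12}$. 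This is precisely $D_{12}(r)=\sum_{k}\frac{r^k}{k!}P^{(k)}_{12}-\delta_{r,0}P^{(0)}_{12}$.

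I expect the passage out of the first quadrant to be the crux. The difference-derivative apparatus of Lemmas~\ref{eigenvector}--\ref{polynomialonquadrant} only ever produces polynomiality on a single quadrant, and two adjacent quadrants meet merely along a coordinate axis, a one-dimensional set on which Lemma~\ref{equalpolynomials} cannot force two bivariate polynomials to coincide. The bracket \eqref{bracket} is exactly what bridges this gap, by confining the discrepancy $E$ to a finite set; the points needing the most care are the legitimacy of comparing coefficients in the polynomial identity in $r$ and the verification that the reflected functions genuinely fall under Lemma~\ref{eigenvector}.
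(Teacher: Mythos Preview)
Your approach to leaving the first quadrant---defining $E=D_{12}-Q$, using \eqref{bracket} to force $E$ to have finite support, and then killing $E$ on each quadrant because a finitely supported polynomial vanishes---is genuinely different from the paper's, and arguably more conceptual. The paper instead introduces the shear automorphism $\sigma\colon D_{12}(r_1,r_2)\mapsto -D_{12}(-r_1+r_2,r_2)$, which carries the open first quadrant to a diagonal region $\mathcal{R}_5$ meeting both the first and second open quadrants in two-dimensional sets, so that Lemma~\ref{equalpolynomials} can glue the local polynomials directly without any spectral argument on $\mathrm{ad}\,P^{(e_i)}_{12}$.

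There is, however, a gap in your first paragraph. The vanishing supplied by Lemma~\ref{eigenvector} at the base point $e_1$ is $\partial_1^{m'}\partial_2^{n}D_{12}(e_1)=0$ only for $m'\ge1$; propagation by nonnegative shifts preserves this restriction, so at an axis point $s=(s_1,0)$ you do \emph{not} know $\partial_2^{n}D_{12}(s)=0$ for large $n$, and Lemma~\ref{polynomialonquadrant} cannot be invoked there. The combined vanishing for all $m+n>K+1$ is available only at base points in the \emph{open} first quadrant $\{s_1\ge1,\,s_2\ge1\}$, where both families contribute. Consequently your reflections yield $E=0$ only on the four open quadrants, and the sentence ``which exhausts $\mathbb{Z}^2\setminus\{0\}$'' leaves the coordinate axes uncovered. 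A clean repair inside your own framework: once $[Q(r),E(s)]=(r_1s_2-r_2s_1)E(s)$ holds as a polynomial identity in $r$, substituting it and its $r\leftrightarrow s$ counterpart back into the expansion of \eqref{bracket} gives $[E(r),E(s)]=-(r_1s_2-r_2s_1)E(r+s)$; for any $t\ne0$ choose $r$ in the open first quadrant with $r_1t_2\ne r_2t_1$ and set $s=t-r$, so the left side vanishes and $E(t)=0$. This handles the axes (indeed all of $\mathbb{Z}^2\setminus\{0\}$) at once and even renders the reflection step superfluous. Alternatively, do what the paper does at this stage: apply Lemma~\ref{polynomialonquadrant} to $\partial_1 D_{12}$ and $\partial_2 D_{12}$ and then anti-differentiate.
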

\begin{proof}
It follows from Lemmas \ref{eigenvector} and \ref{finiteeigenvals} that there exists a $K\in\mathbb{N}$ such that $\partial_1^{m+1}\partial_2^nD_{12}(e_1)=0$ and $\partial_1^m\partial_2^{n+1}D_{12}(e_2)=0$ for $n+m>K$. By Lemma \ref{polynomialonquadrant} there exist $\text{End}(U)$-valued polynomials $P_1$ and $P_2$ such that 
\begin{equation*}
P_1(r)=\partial_1D_{12}(r)\text{ and }P_2(s)=\partial_2D_{12}(s),
\end{equation*}
for all $r=r_1e_1+r_2e_2$ and $s=s_1e_1+s_2e_2$ with $r_1,s_2\geq1$ and $r_2,s_1\geq0$. 
Taking polynomial difference antiderivatives $\bar{P}_1(r)$ and $\bar{P}_2 (s)$ respectively, we get
\begin{equation*}
D_{12}(r)=\bar{P}_1(r)+g_1(r_2)\text{ and }D_{12}(s)=\bar{P}_2(s)+g_2(s_1),
\end{equation*}
where $\bar{P}_1$ and $\bar{P}_2$ are polynomials, and $g_1$ and $g_2$ are functions of $r_2$ and $s_1$ respectively. Then $\bar{P}_1(r)+g_1(r_2)=\bar{P}_2(r)+g_2(r_1)$ for $r_1,r_2\geq 1$, so
\begin{equation*}
 g_2(r_1)-g_1(r_2)=\bar{P}_1(r)-\bar{P}_2(r).
\end{equation*}
Taking the $m$th difference derivative in $e_1$ where $m>K$ gives
\begin{equation*}
 \partial_1^m g_2(r_1)=\partial_1^m(\bar{P}_1(r)-\bar{P}_2(r))=0
\end{equation*}
which implies that $g_2$ is a polynomial in $r_1$. Similarly $g_1$ is a polynomial in $r_2$. Thus
$D_{12}(r)=\bar{P}_1(r)+g_1(r_2)$ and $D_{12}(r)=\bar{P}_2(r)+g_2(r_1),$ are polynomials that agree on $\mathcal{R}_1=\{(i,j)\in\mathbb{Z}^2|i,j\geq1\}$, and hence must be equal by Lemma \ref{equalpolynomials}. Therefore $D_{12}(r)$ is an $\text{End}(U)$-valued polynomial $Q_1(r)$ on $\mathcal{R}_1$. It remains to show that $D_{12}(r)$ acts by a polynomial $P(r)$ on all of $\mathbb{Z}^2$ except at the origin.

Let $\mathcal{L}$ be the Lie algebra with basis elements $D_{12}(r)$ for $r\in\mathbb{Z}^2$ and Lie bracket given by \ref{bracket}. Consider the automorphisms $\varphi_1$ and $\varphi_2$ of $\mathcal{L}$, where $\varphi_1(D_{12}(r_1,r_2))=-D_{12}(-r_1,r_2)$, $\varphi_2(D_{12}(r_1,r_2))=-D_{12}(r_1,-r_2)$, and their composition $\varphi_2\circ\varphi_1$ where $\varphi_2\circ\varphi_1(D_{12}(r_1,r_2)) = D_{12}(-r_1,-r_2)$. What was proven for $\mathcal{L}$ is also true for its image under these automorphisms. Thus there are $\text{End}(U)$-valued polynomials $Q_2,Q_3$, and $Q_4$ such that $D_{12}(r)=Q_2(r)$ for $r\in\mathcal{R}_2=\{(-i,j)\in\mathbb{Z}^2|i,j\geq1\}$, $D_{12}(r)=Q_3(r)$ for $r\in\mathcal{R}_3=\{(-i,-j)\in\mathbb{Z}^2|i,j\geq1\}$, and $D_{12}(r)=Q_4(r)$ for $r\in\mathcal{R}_4=\{(i,-j)\in\mathbb{Z}^2|i,j\geq1\}$.

Let $\sigma:\mathcal{L}\rightarrow\mathcal{L}$ be the automorphism defined by $\sigma(D_{12}(r_1,r_2))=-D_{12}(-r_1+r_2,r_2)$. When applied to $\mathcal{R}_1$, this implies the existence of an $\text{End}(U)$-valued polynomial $P$ such that $D_{12}(r)=P(r)$ on the region $\mathcal{R}_5=\{(i,j)\in\mathbb{Z}^2|j\geq1,i\leq j\}$. Lemma \ref{equalpolynomials} may be applied to the intersection of $\mathcal{R}_1$ and $\mathcal{R}_5$ which says that $Q_1=P$. Applied again to the intersection of $\mathcal{R}_2$ and $\mathcal{R}_5$ yields that $Q_2=P$. Thus $D_{12}(r)$ acts by $\text{End}(U)$-valued polynomial $P(r)$ for $r\in\{(i,j)\in\mathbb{Z}^2|j\geq1\}$.

Similar techniques may be applied to connect this region with $\mathcal{R}_3$ and $\mathcal{R}_4$. The result that follows is that $D_{12}(r)$ acts by $\text{End}(U)$-valued polynomial $P(r)$ for $r\in\mathbb{Z}^2\setminus\{(0,0)\}$. 

To indicate that the polynomial obtained above is specific to the operator $D_{12}(r)$, write $P_{12}$ instead of $P$. Decompose $P_{12}(r)$ into powers of $r$ as
\begin{equation*}
P_{12}(r)=\sum_{k\in\mathbb{Z}_{\geq 0}^2}\frac{r^k}{k!}P^{(k)}_{12},
\end{equation*}
where the sum is finite, the $P^{(k)}_{12}\in\text{End}(U)$ do not depend on $r$, and the factor of $k!$ is there for convenience. Note that $P_{12}(0,0)=P_{12}^{(0,0)}$, however $D_{12}(0,0)=0$ by definition and so it must act by zero. To avoid a contradiction at the origin a delta function is added so that
\begin{equation*}
D_{12}(r)=\sum_{k\in\mathbb{Z}_{\geq 0}^2}\frac{r^k}{k!}P^{(k)}_{12}-\delta_{r,0}P_{12}^{(0)},
\end{equation*}
which is now valid for all $r\in\mathbb{Z}^2$.
\end{proof}
\begin{lem}\label{Ngeq2case}
Let $N\geq2$ and $J=A_N\otimes U$ be a module in category $\mathcal{J}$. Then for $a,b\in\{1,\dots,N\}$, $a\neq b$, $D_{ab}(r)$ acts on $U$ by
\begin{equation*}
D_{ab}(r)=\sum_{k\in\mathbb{Z}_{\geq 0}^N}\frac{r^k}{k!}P^{(k)}_{ab}
\end{equation*}
for  $r\in\mathbb{Z}^N$ with $(r_a,r_b)\neq(0,0)$, where the $P^{(k)}_{ab}\in\text{\emph{End}}(U)$ do not depend on $r$, and the summation is finite.
\end{lem}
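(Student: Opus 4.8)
\section*{Proof proposal}

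The plan is to separate $N=2$ from $N\geq 3$. When $N=2$ the region $(r_a,r_b)\neq(0,0)$ is exactly $\mathbb{Z}^2\setminus\{0\}$, and on that region the delta term in Proposition \ref{N=2case} disappears, so the asserted polynomial is precisely $P_{12}(r)=\sum_k \tfrac{r^k}{k!}P^{(k)}_{12}$ and there is nothing further to prove. I therefore concentrate on $N\geq 3$, where the genuinely new feature is the dependence of $D_{ab}(r)$ on the coordinates $r_c$ with $c\neq a,b$.

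First I would establish polynomiality of $D_{ab}$ in its own two directions $r_a,r_b$, uniformly in the remaining coordinates. Write $r=r_ae_a+r_be_b+w$ with $w_a=w_b=0$. Both the bracket \eqref{bracket} and the computation in Lemma \ref{eigenvector} involve only the elements $D_{ab}(\pm e_a),D_{ab}(\pm e_b)$, whose arguments have vanishing $c$-components, and the numerical coefficients there depend only on the $a,b$ components; hence the whole computation of Lemma \ref{eigenvector} goes through verbatim with $w$ added to every argument. Thus for each fixed $w$, $\partial_a^m\partial_b^n D_{ab}(e_a+w)$ is an eigenvector with eigenvalue $-n(m+1)$, and Lemma \ref{finiteeigenvals} forces $\partial_a^{m+1}\partial_b^n D_{ab}(e_a+w)=0$ and $\partial_a^m\partial_b^{n+1}D_{ab}(e_b+w)=0$ whenever $m+n>K$, with $K$ independent of $w$. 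Running the argument of Proposition \ref{N=2case} inside each coset $w+\langle e_a,e_b\rangle$ — which is legitimate because the automorphisms $\varphi_1,\varphi_2,\sigma$ act only on the $a,b$ coordinates and hence stabilize each such coset — produces an $\mathrm{End}(U)$-valued polynomial $Q_w(r_a,r_b)$ of degree at most $K$ with $D_{ab}(r)=Q_w(r_a,r_b)$ for all $(r_a,r_b)\neq(0,0)$. By symmetry the same holds for every $D_{ij}$: it is polynomial, of uniformly bounded degree, in its own pair of directions.

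Next I would upgrade this to joint polynomiality using the identity $r_cD_{ab}(r)=r_aD_{cb}(r)+r_bD_{ac}(r)$, valid for $N\geq 3$. Fix a third index $c$ and all coordinates except $r_c$. By the previous step $D_{cb}$ and $D_{ac}$ are polynomial in $r_c$ (their own direction), so the right-hand side is a polynomial $\rho(r_c)$. Evaluating the identity at $r_c=0$ shows $\rho(0)=r_aD_{cb}|_{r_c=0}+r_bD_{ac}|_{r_c=0}=0$, so $r_c\mid\rho(r_c)$ and $D_{ab}(r)=\rho(r_c)/r_c$ is itself polynomial in $r_c$. Hence, on the region $(r_a,r_b)\neq(0,0)$, $D_{ab}$ is polynomial of bounded degree in each coordinate separately; a standard interpolation argument (extract each coefficient as a fixed linear combination of finitely many values, each in turn polynomial in the other variables) then makes $D_{ab}$ jointly polynomial on a half-space such as $\{r_a\geq 1\}$. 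Finally I would glue across sign patterns as in Proposition \ref{N=2case}: the half-spaces $\{r_a\geq 1\}$, $\{r_a\leq -1\}$, $\{r_b\geq 1\}$, $\{r_b\leq -1\}$ each carry a polynomial, they overlap in regions containing arbitrarily large grids, and Lemma \ref{equalpolynomials} forces these to coincide, yielding a single polynomial $P_{ab}(r)=\sum_k\tfrac{r^k}{k!}P^{(k)}_{ab}$ agreeing with $D_{ab}(r)$ on all of $(r_a,r_b)\neq(0,0)$.

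The main obstacle is the third step: controlling the dependence on the coordinates $r_c$ with $c\neq a,b$, since Lemma \ref{eigenvector} only produces eigenvectors in the $a,b$ directions and gives no direct bound on $\partial_c$. The relation $r_cD_{ab}=r_aD_{cb}+r_bD_{ac}$ is the key device, but it must be handled carefully: it pins down $D_{ab}$ only after dividing by $r_c$, so the argument hinges on the observation that $\rho(0)=0$ (itself a consequence of the same relation at $r_c=0$), on checking consistency of the resulting polynomial with the values forced by own-direction polynomiality along the hyperplanes $r_c=0$, and on promoting separate-variable polynomiality to joint polynomiality via the uniform degree bound. A secondary point requiring attention is that the per-coset use of Proposition \ref{N=2case} is valid precisely because the relevant automorphisms act trivially on the coordinates $r_c$ and therefore stabilize each coset $w+\langle e_a,e_b\rangle$.
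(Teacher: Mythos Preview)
Your approach is genuinely different from the paper's. The paper proceeds by induction on $N$: assuming $D_{12}(r_1 e_1 + \cdots + r_{N-1}e_{N-1})$ is polynomial, it uses two explicit bracket identities --- first $[D_{1N}(r_Ne_N), D_{12}(r_1e_1+\cdots+r_{N-1}e_{N-1})]$ to show that $r_1r_N D_{12}(r)$ is polynomial (unique factorization against the companion identity with indices $1,2$ swapped reduces this to $r_N D_{12}(r)$), and second a bracket $[D_{1N}(r_1e_1+s_Ne_N), D_{2N}(\cdots)]$ with a fixed $s_N$ to strip off the remaining factor of $r_N$ --- obtaining polynomiality on $\bigcap_i\{r_i\neq 0\}$; a final application of \eqref{bracket} with a constant $s$ then extends this to $\{(r_a,r_b)\neq(0,0)\}$. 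Your route instead bootstraps a coset-wise version of Proposition~\ref{N=2case} to get polynomiality of every $D_{ij}$ in its own two directions, and then invokes the \emph{linear} relation $r_cD_{ab}=r_aD_{cb}+r_bD_{ac}$ --- rather than any bracket computation --- to propagate polynomiality to the transverse coordinates. This is more conceptual and avoids the paper's two ad hoc bracket identities and the factorization trick. What the paper's approach buys is that it never needs to revisit Lemma~\ref{eigenvector} or Proposition~\ref{N=2case} in a coset, and it reaches $\{(r_a,r_b)\neq(0,0)\}$ in one sweep without a separate interpolation step.

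One point in your argument needs tightening. Your Step~3 claim that $\rho(r_c)=r_aD_{cb}(r)+r_bD_{ac}(r)$ is polynomial in $r_c$ for \emph{all} $r_c$ requires $(r_c,r_b)\neq(0,0)$ for the first summand and $(r_a,r_c)\neq(0,0)$ for the second, which forces \emph{both} $r_a\neq 0$ and $r_b\neq 0$; with only one of them nonzero the relevant summand is polynomial only for $r_c\neq 0$, and you cannot directly evaluate $\rho(0)$. So Step~3 as written yields $r_c$-polynomiality only on $\{r_a\neq 0,\ r_b\neq 0\}$, and your interpolation produces a joint polynomial $P$ on each open quadrant in $(r_a,r_b)$, not on a half-space $\{r_a\geq 1\}$. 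The fix is immediate from your own Step~2: for each fixed $w$, both $P(\cdot,\cdot,w)$ and $Q_w$ are polynomials in $(r_a,r_b)$ of bounded degree agreeing on a quadrant, hence equal; thus $P(r)=Q_w(r_a,r_b)=D_{ab}(r)$ for all $(r_a,r_b)\neq(0,0)$ and all $w$, which simultaneously glues the quadrants and extends $P$ across the axes $r_a=0$ and $r_b=0$.
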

\begin{proof}
It follows from Proposition \ref{N=2case} that for any $a,b\in\{1,\dots,N\}$, $a\neq b$, the operators $D_{ab}(r_ae_a+r_be_b)$ act by polynomials in $r_a,r_b$, when $(r_a,r_b)\neq(0,0)$, since for these values the delta function vanishes.

The result will be proven by induction on $N$, with the induction hypothesis that $D_{12}\left(r_1e_1+\dots+r_{N-1}e_{N-1}\right)$ has polynomial action for $(r_1,r_2)\neq(0,0)$. For convenience this is stated for $D_{12}(r)$, though it holds for any $D_{ab}(r)$ by a change of indices. The basis of induction $N=2$ follows from Proposition \ref{N=2case}.

Assume until otherwise stated that $r_i\neq0$ for all $i\in\{1,\dots,N\}$. Consider
\begin{multline*}
[D_{1N}(r_{N}e_{N}),D_{12}(r_1e_1+\dots+r_{N-1}e_{N-1})]\\=-r_1r_{N}D_{12}(r_1e_1+\dots+r_{N-1}e_{N-1})+r_1r_{N}D_{12}(r_1e_1+\dots+r_Ne_{N}).
\end{multline*}
Both $D_{1N}(r_{N}e_{N})$, and $D_{12}(r_1e_1+\dots+r_{N-1}e_{N-1})$ have polynomial action by the induction hypothesis. Rearrange to get
\begin{multline*}
r_1r_{N}D_{12}(r_1e_1+\dots+r_Ne_{N})\\
=[D_{1N}(r_{N}e_{N}),D_{12}(r_1e_1+\dots+r_{N-1}e_{N-1})]+r_1r_{N}D_{12}(r_1e_1+\dots+r_{N-1}e_{N-1}).\!
\end{multline*}
The right hand side is a polynomial in $r_1,\dots,r_{N}$ and thus $r_1r_{N}D_{12}(r)=P(r)$, for some $\text{End}(U)$-valued polynomial $P$, and $r=r_1e_1+\dots+r_Ne_{N}$. Symmetry in indices $1$ and $2$ yields that $r_2r_{N}D_{12}(r)=Q(r)$, for some $\text{End}(U)$-valued polynomial $Q$. Thus,
\begin{equation*}
r_2P(r)=r_1Q(r).
\end{equation*}
Unique factorization of a polynomial into irreducible factors implies that $P$ factors as $P(r)=r_1\bar{P}(r)$ and so $r_1r_{N}D_{12}(r)=r_1\bar{P}(r)$.  Since $r_1\neq 0$, division of polynomials gives that  $r_{N}D_{12}(r)=\bar{P}(r)$. Thus, $r_{N}D_{12}(r)$ has polynomial action, or more generally, $r_aD_{bc}(r)$ has polynomial action for $a\neq b,c$. 

Fix $s_N\neq0$ and consider
\begin{align*}
&[D_{1N}(r_{1}e_{1}+s_Ne_N),D_{2N}(r_2e_2+r_3e_3+\dots+(r_N-s_N)e_N)]\\
&=r_{1}(r_N-s_N)D_{2N}(r_2e_2+r_3e_3+\dots+(r_N-s_N)e_N)\\
&\quad-s_Nr_2D_{1N}(r_{1}e_{1}+s_Ne_N)-r_{1}(r_N-s_N)D_{2N}(r_1e_1+\dots+r_{N}e_{N})\\
&\quad-s_N(r_N-s_N)D_{12}(r_1e_1+\dots+r_{N}e_{N})+s_Nr_2D_{1N}(r_1e_1+\dots+r_{N}e_{N}).
\end{align*}
Let $r=r_1e_1+\dots+r_{N}e_{N}$, and isolate the $D_{12}$ term to get
\begin{align*}
&s_N(r_N-s_N)D_{12}(r)=r_{1}(r_N-s_N)D_{2N}(r_2e_2+r_3e_3+\dots+(r_N-s_N)e_N)\\
&-[D_{1N}(r_{1}e_{1}+s_Ne_N),D_{2N}(r_2e_2+r_3e_3+\dots+(r_N-s_N)e_N)]\\
&-s_Nr_2D_{1N}(r_{1}e_{1}+s_Ne_N)-r_{1}(r_N-s_N)D_{2N}(r)+s_Nr_2D_{1N}(r).
\end{align*}
The first three terms on the right hand side have polynomial action by induction. The last two terms are of the form $r_aD_{bc}(r)$, as is $r_ND_{12}(r)$ on the left hand side, and hence these have polynomial action by the previous step. Since both left hand side $s_N(r_N-s_N)D_{12}(r)$, and $s_Nr_ND_{12}(r)$ has polynomial action, so does their difference $-s_N^2D_{12}(r)$. Because $s_N$ is a nonzero constant this implies that $D_{12}(r)$ has polynomial action. Again by considering a change of indices, this proves that $D_{ab}(r)$ has polynomial action on the region $\mathcal{R}_0=\bigcap_i\{r_i\neq0\}$. It remains to show that $D_{ab}(r)$ has polynomial action for $(r_a,r_b)\neq(0,0)$.

Let $r,s\in\mathbb{Z}^N$ where $s$ is constant. Rearranging the bracket formula gives
\begin{equation*}
(s_ar_b-r_as_b)D_{ab}(r)=(s_ar_b-r_as_b)(D_{ab}(s)+D_{ab}(r-s))-[D_{ab}(s),D_{ab}(r-s)].
\end{equation*}
On the right hand side $D_{ab}(s)$ is constant in $r$ and, by what was just shown, $D_{ab}(r-s)$ has polynomial action in the region $\mathcal{R}_{s}=\bigcap_i\{r_i\neq s_i\}$. Thus there is an $\text{End}(U)$-valued polynomial $T$ such that $(s_ar_b-r_as_b)D_{ab}(r)=T(r)$ for $r\in\mathcal{R}_s$. Similarly for $s'\in\mathbb{Z}^N$,  there is a polynomial $T'$ such that $(s_a'r_b-r_as_b')D_{ab}(r)=T'(r)$ for $r\in\mathcal{R}_{s'}$. Then
\begin{equation*}
(s_a'r_b-r_as_b')T(r)=(s_ar_b-r_as_b)T'(r)
\end{equation*}
for $r\in\mathcal{R}_{s}\cap\mathcal{R}_{s'}$, which implies that $(s_ar_b-r_as_b)$ is an irreducible factor of $T(r)$. So $(s_ar_b-r_as_b)D_{ab}(r)=(s_ar_b-r_as_b)\bar{T}(r)$, for polynomial $\bar{T}$, and when $s_ar_b-r_as_b\neq0$, $D_{ab}(r)=\bar{T}(r)$. Thus $D_{ab}(r)$ has polynomial action on the region $\mathcal{R}_s\cap\{s_ar_b\neq s_br_a\}$. The union of these regions is $\bigcup_s(\mathcal{R}_s\cap\{s_ar_b\neq s_br_a\})=\{(r_a,r_b)\neq(0,0)\}$. Since these regions are defined by deleting a finite number of hyperplanes from $\mathbb{Z}^N$, the intersection of any two contains a cube of arbitrary size. So any two polynomials that agree on the intersection must be equal. Therefore $D_{ab}(r)$ acts by an $\text{End}(U)$-valued polynomial $P_{ab}$ on the region $\{(r_a,r_b)\neq(0,0)\}$. The polynomial $P_{ab}$ can be decomposed into a finite sum in powers of $r$ so that
\begin{equation*}
D_{ab}(r)=\sum_{k\in\mathbb{Z}_{\geq 0}^N}\frac{r^k}{k!}P_{ab}^{(k)}.
\end{equation*}
for all $r\in\mathbb{Z}^N$ with $\{(r_a,r_b)\neq(0,0)\}$.
\end{proof}
\begin{prop}\label{Ngeq3case}
Let $N\geq3$ and $J=A_N\otimes U$ a module in category $\mathcal{J}$. Then for $a,b\in\{1,\dots,N\}$, $a\neq b$, $D_{ab}(r)$ acts on $U$ by
\begin{equation*}
D_{ab}(r)=\sum_{k\in\mathbb{Z}_{\geq 0}^N}\frac{r^k}{k!}P^{(k)}_{ab}
\end{equation*}
for all $r\in\mathbb{Z}^N$, where the $P^{(k)}_{ab}\in\text{\emph{End}}(U)$ do not depend on $r$, and the summation is finite. In addition $P_{ab}^{(k)}=0$ when $k_a=k_b=0$.
\end{prop}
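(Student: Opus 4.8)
The plan is to bootstrap from Lemma~\ref{Ngeq2case}, which already furnishes, for every pair of distinct indices, an $\text{End}(U)$-valued polynomial agreeing with the corresponding $D$-operator on the complement of the codimension-two set $\{r_a=r_b=0\}$. Two things remain: to cover those missing points, and to prove the extra vanishing $P^{(k)}_{ab}=0$ for $k_a=k_b=0$. I would observe first that this vanishing statement is equivalent to saying the polynomial $P_{ab}(r)=\sum_k\frac{r^k}{k!}P^{(k)}_{ab}$ restricts to the zero polynomial on the subspace $\{r_a=r_b=0\}$, since the monomials $r^k$ with $k_a=k_b=0$ are exactly those surviving that restriction. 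Thus both remaining tasks reduce to controlling $P_{ab}$ on this subspace, and the new ingredient not available when $N=2$ is the syzygy $r_cD_{ab}(r)+r_aD_{bc}(r)+r_bD_{ca}(r)=0$, valid precisely for $N\geq3$.

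The key step is to turn this syzygy into a polynomial identity. Fix a third index $c\notin\{a,b\}$, available since $N\geq3$, and let $P_{ab},P_{bc},P_{ca}$ be the polynomials from Lemma~\ref{Ngeq2case}. On the region $\mathcal{R}=\{r_a\neq0\}\cap\{r_b\neq0\}\cap\{r_c\neq0\}$ all three polynomial descriptions are simultaneously valid, so the syzygy reads $r_cP_{ab}(r)+r_aP_{bc}(r)+r_bP_{ca}(r)=0$ on $\mathcal{R}$. Because $\mathcal{R}$ is obtained from $\mathbb{Z}^N$ by deleting only three coordinate hyperplanes, it still contains cubes of arbitrary size, so Lemma~\ref{equalpolynomials} upgrades this to a genuine identity of $\text{End}(U)$-valued polynomials. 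Specializing the identity to $r_a=r_b=0$ kills the two terms carrying the factors $r_a$ and $r_b$, leaving $r_c\cdot P_{ab}(r)\big|_{r_a=r_b=0}=0$ as an identity in the remaining variables. Since $r_c$ is a single variable, multiplication by it is injective on the free $\text{End}(U)$-module of polynomials, so $P_{ab}$ restricts to zero on $\{r_a=r_b=0\}$; this is exactly $P^{(k)}_{ab}=0$ whenever $k_a=k_b=0$.

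To finish, I would note that for $N\geq3$ one has $d_{ab}(r)=0$, hence $D_{ab}(r)=0$, precisely when $r_a=r_b=0$. Combined with the vanishing of $P_{ab}$ there just established, together with Lemma~\ref{Ngeq2case} on the complement, the formula $D_{ab}(r)=P_{ab}(r)$ holds for all $r\in\mathbb{Z}^N$, which is the claim. The heavy lifting is inherited from Lemma~\ref{Ngeq2case}, so there is no deep obstacle here; the only points that genuinely need care are the passage from an identity holding merely on $\mathcal{R}$ to a true polynomial identity (one must verify that removing the three hyperplanes still leaves arbitrarily large cubes, so that Lemma~\ref{equalpolynomials} applies), and the cancellation of the variable $r_c$, which is legitimate because multiplication by a monomial is injective even though $\text{End}(U)$ is not a domain. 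After these, the argument is formal.
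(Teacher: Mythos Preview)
Your argument is correct and follows essentially the same route as the paper: both proofs invoke Lemma~\ref{Ngeq2case}, substitute the resulting polynomials into the syzygy $r_cD_{ab}(r)+r_aD_{bc}(r)+r_bD_{ca}(r)=0$ on a region where all three polynomial descriptions hold, upgrade to a polynomial identity, and read off $P_{ab}^{(k)}=0$ for $k_a=k_b=0$. The only cosmetic differences are that the paper packages the correction as an explicit $\delta_{r_a,0}\delta_{r_b,0}$ term and extracts a monomial coefficient rather than specializing, and works on the slightly smaller region $\{r_i\neq0\text{ for all }i\}$ rather than your $\{r_a,r_b,r_c\neq0\}$; neither difference is substantive.
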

\begin{proof}
Since $D_{ab}\left(r\right)=0$ when $r_a=r_b=0$ by definition, it follows from Lemma \ref{Ngeq2case} that $D_{ab}(r)$ may be expressed as
\begin{equation*}
D_{ab}(r)=\sum_{k\in\mathbb{Z}_{\geq 0}^N}\frac{r^k}{k!}P_{ab}^{(k)}-\delta_{r_a,0}\delta_{r_b,0}\sum_{\substack{k\in\mathbb{Z}_{\geq 0}^{N}\\k_a=k_b=0}}\frac{r^k}{k!}P_{ab}^{(k)}
\end{equation*}
which holds for all $r\in\mathbb{Z}^N$. Now substitute the expression above into the equation $r_cD_{ab}(r)+r_aD_{bc}(r)+r_bD_{ca}(r)=0$ on the region $r_i\neq0$ for all $i\in\{1,\dots,N\}$. The terms with delta functions vanish leaving
\begin{equation}\label{P_ab^krelationship}
r_c\sum_{k\in\mathbb{Z}_{\geq 0}^N}\frac{r^k}{k!}P_{ab}^{(k)}+r_a\sum_{k\in\mathbb{Z}_{\geq 0}^N}\frac{r^k}{k!}P_{bc}^{(k)}+r_b\sum_{k\in\mathbb{Z}_{\geq 0}^N}\frac{r^k}{k!}P_{ca}^{(k)}=0.
\end{equation}
Extracting the coefficient of $r_cr^k$ for $k\in\mathbb{Z}_{\geq 0}^N$ with $k_a=k_b=0$ yields that
\begin{equation*}
\frac{1}{k!}P_{ab}^{(k)}=0.
\end{equation*}
This shows that the terms with delta functions are not necessary, and therefore
\begin{equation*}
D_{ab}(r)=\sum_{k\in\mathbb{Z}_{\geq 0}^N}\frac{r^k}{k!}P_{ab}^{(k)}
\end{equation*}
for all $r\in\mathbb{Z}^N$.
\end{proof}
\section{Classification}	
Consider the Lie algebra of derivations of polynomials in $N$ variables,
\begin{equation*}
\text{Der}(\mathbb{C}[x_1,\dots,x_N])=\text{Span}_{\mathbb{C}}\left\{\left.x^k\frac{\partial}{\partial x_a}\right|a\in\{1,\dots,N\},k\in\mathbb{Z}_{\geq0}^N\right\},
\end{equation*}
and its subalgebra consisting of divergence zero elements,
\begin{equation*}
S_N=\text{Span}_{\mathbb{C}}\left\{\left.S_{ab}(k)\right|a,b\in\{1,\dots,N\},k\in\mathbb{Z}_{\geq0}^N\right\},
\end{equation*}
where $S_{ab}(k)=k_bx^{k-e_b}\frac{\partial}{\partial x_a}-k_ax^{k-e_a}\frac{\partial}{\partial x_b}$. Its Lie bracket is given by
\begin{multline*}
[S_{ab}(q),S_{cd}(k)]=q_bk_cS_{ad}(q+k-e_b-e_c)-q_bk_dS_{ac}(q+k-e_b-e_d)\\
-q_ak_cS_{bd}(q+k-e_a-e_c)+q_ak_dS_{bc}(q+k-e_a-e_d).
\end{multline*}
Note that $S_{ab}(e_a)=-\frac{\partial}{\partial x_b}$ and $S_{ab}(e_b)=\frac{\partial}{\partial x_a}$.

For $n\in\mathbb{N}$ let $\mathfrak{L}_n=\text{Span}_{\mathbb{C}}\left\{\left.S_{ab}(k)\right|a,b\in\{1,\dots,N\},|k|=n+2\right\}$ so that $S_N=\bigoplus_{i=-1}^{\infty}\mathfrak{L}_i$. The bracket above  
shows that $[\mathfrak{L}_i,\mathfrak{L}_j]\subset \mathfrak{L}_{i+j}$. 
\begin{lem}\label{slNmodules}
In the grading $S_N=\bigoplus_{i=-1}^{\infty}\mathfrak{L}_i$, the component $\mathfrak{L}_0$ is isomorphic to $\mathfrak{sl}_N$ and each $\mathfrak{L}_i$ is an irreducible $\mathfrak{sl}_N$-module.
\end{lem}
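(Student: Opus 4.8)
The plan is to work with the explicit structure of $S_N$ as a graded Lie algebra and analyze the bottom two components directly. First I would pin down $\mathfrak{L}_0$. By definition $\mathfrak{L}_0$ is spanned by the elements $S_{ab}(k)$ with $|k|=2$, which are the linear vector fields. Writing these out, $S_{ab}(k)=k_b x^{k-e_b}\frac{\partial}{\partial x_a}-k_a x^{k-e_a}\frac{\partial}{\partial x_b}$ becomes, for $|k|=2$, a linear combination of the operators $x_j\frac{\partial}{\partial x_i}$, and the divergence-zero condition is precisely the trace-zero condition. So I would construct the explicit map sending $S_{ab}(k)$ to the corresponding traceless matrix acting on the span of $\{x_1,\dots,x_N\}$, check that it is a bijection onto $\mathfrak{sl}_N$, and verify it respects brackets using the displayed bracket formula for $[S_{ab}(q),S_{cd}(k)]$ specialized to $|q|=|k|=2$ (where all the $q+k-e_\cdot-e_\cdot$ indices again have weight $2$). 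This gives the first assertion $\mathfrak{L}_0\cong\mathfrak{sl}_N$.

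For the second assertion I would identify each $\mathfrak{L}_i$ explicitly as a vector space and then as an $\mathfrak{sl}_N$-module. The space $\mathfrak{L}_i$ consists of divergence-zero homogeneous vector fields of polynomial degree $i+1$, i.e.\ elements $\sum_a f_a\frac{\partial}{\partial x_a}$ with each $f_a$ homogeneous of degree $i+1$ and $\sum_a \frac{\partial f_a}{\partial x_a}=0$. The natural ambient space is $H_{i+1}\otimes V$, where $H_{i+1}$ is the space of homogeneous polynomials of degree $i+1$ and $V=\mathbb{C}^N$ is the standard $\mathfrak{sl}_N$-module (the coefficients of the $\frac{\partial}{\partial x_a}$), with $\mathfrak{L}_0$ acting by its natural linear action on both tensor factors. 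The divergence map $\sum_a f_a\frac{\partial}{\partial x_a}\mapsto\sum_a\frac{\partial f_a}{\partial x_a}$ is an $\mathfrak{sl}_N$-module homomorphism $H_{i+1}\otimes V\to H_i$, and $\mathfrak{L}_i$ is exactly its kernel. So the task reduces to showing this kernel is irreducible as an $\mathfrak{sl}_N$-module.

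To prove irreducibility I would use highest-weight theory. The whole space $H_{i+1}\otimes V$ decomposes into at most two irreducible constituents by the Pieri/Littlewood--Richardson rule (tensoring a symmetric-power representation $\mathrm{Sym}^{i+1}V$ with the standard $V$), and the divergence map surjects onto $H_i\cong\mathrm{Sym}^i V$, which accounts for exactly one of these constituents. A clean way to package this is to exhibit an explicit highest-weight vector generating $\mathfrak{L}_i$ and a complementary surjection, so that the kernel of the divergence is forced to be a single irreducible summand. Concretely I would produce the highest-weight vector $x_1^{i+1}\frac{\partial}{\partial x_2}$ (which is $S_{21}((i+1)e_1)$ up to scalar and visibly has zero divergence), check it is annihilated by the raising operators in $\mathfrak{L}_0$, and verify that its $\mathfrak{sl}_N$-submodule has dimension equal to $\dim H_{i+1}\otimes V-\dim H_i$, which equals $\dim\mathfrak{L}_i$; a dimension count using $\dim H_m=\binom{m+N-1}{N-1}$ then closes the argument.

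The main obstacle is the irreducibility step: one must rule out that the divergence-free vector fields split into more than one $\mathfrak{sl}_N$-irreducible piece. The cleanest resolution is the highest-weight-vector-plus-dimension-count sketched above, since it avoids invoking the full decomposition of $\mathrm{Sym}^{i+1}V\otimes V$ and instead needs only that a single highest weight occurs with multiplicity one in $\mathfrak{L}_i$. I would be careful with the boundary case $i=-1$ (where $\mathfrak{L}_{-1}$ is spanned by the $\frac{\partial}{\partial x_a}=S_{ab}(e_b)$ and is the standard module $V$, automatically irreducible) and with low-rank coincidences when $N=2$, where $\mathfrak{sl}_2$-representation theory makes every $\mathfrak{L}_i$ the irreducible $(i+2)$-dimensional module and the dimension count is immediate.
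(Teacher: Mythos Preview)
Your overall strategy is sound and genuinely different from the paper's. The paper argues irreducibility by invoking Weyl's theorem and then showing directly, by a case analysis on the exponent vector $m$, that each $\mathfrak{L}_i$ contains a \emph{unique} (up to scalar) highest weight vector, namely $x_1^{i+1}\frac{\partial}{\partial x_N}$. Your route---realizing $\mathfrak{L}_i$ as the kernel of the equivariant divergence map out of a tensor product that Pieri splits into exactly two pieces, one of which is the surjective image $H_i$---is more structural and avoids the explicit case split; in exchange it imports the Pieri rule (or at least the two-summand decomposition of $\mathrm{Sym}^{i+1}V\otimes V^{*}$) as a black box. Either way works; the paper's argument is more self-contained, yours is cleaner once the representation-theoretic input is granted.

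Two small corrections. First, the span of the $\frac{\partial}{\partial x_a}$ is the \emph{dual} of the standard module, since $[x_a\partial_b,\partial_c]=-\delta_{a c}\partial_b$; you should be decomposing $H_{i+1}\otimes V^{*}$, not $H_{i+1}\otimes V$. This does not damage your argument (the two-piece decomposition and the surjectivity of the divergence onto $H_i$ still hold), but the labels and the relevant Pieri identity change. Second, your proposed highest weight vector $x_1^{i+1}\frac{\partial}{\partial x_2}$ is \emph{not} annihilated by all raising operators when $N\geq 3$: for instance $[x_2\partial_3,\,x_1^{i+1}\partial_2]=-x_1^{i+1}\partial_3\neq 0$. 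The correct choice (and the one the paper finds) is $x_1^{i+1}\frac{\partial}{\partial x_N}$. With that fix, your highest-weight-plus-dimension-count variant also goes through.
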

\begin{proof}
To see that $\mathfrak{L}_0$ is isomorphic to $\mathfrak{sl}_N$, identify $x_a\frac{\partial}{\partial x_b}$ with $E_{ab}$ and $x_a\frac{\partial}{\partial x_a}-x_b\frac{\partial}{\partial x_b}$ with elements $E_{aa}-E_{bb}$ of the Cartan subalgebra. Each $\mathfrak{L}_i$ is an $\mathfrak{sl}_N$-module via the adjoint action of $\mathfrak{L}_0$.

By Weyl's Theorem on complete reducibilty and the fact that every finite dimensional simple $\mathfrak{sl}_N$-module is a highest weight module, it suffices to show that each $\mathfrak{L}_i$ has a unique highest weight vector. In other words the goal is to show that for each $i$ there exists a unique (up to scalar) $v\in\mathfrak{L}_i$ such that $\left[x_a\frac{\partial}{\partial x_b},v\right]=0$ for all $a,b$ with $a<b$.

An arbitrary member of $\mathfrak{L}_n$ can be expressed as 
$\sum\limits_{|m|=n} u_m$ with
$u_m = $ 
\break
$\sum_{j=1}^{N}C_jx^{m+e_j}\frac{\partial}{\partial x_j}$ where 
$\sum_{j=1}^{N}C_j(m_j+1)=0$, since it has divergence zero. Since $\left[x_a\frac{\partial}{\partial x_a}-x_b\frac{\partial}{\partial x_b},u_m\right]=(m_a-m_b)u_m$ for all $a,b\in\{1,\dots,N\}$, weight vectors of $\mathfrak{L}_n$ must have the form $u_m$ for some fixed $m$.

Let $u_m$ be a highest weight vector for $\mathfrak{L}_n$. Since $x$ may only have nonnegative exponents, two cases arise; either $m_j=-1$ for a single index $j$ and $m_i\geq 0$ otherwise, or else all entries of $m$ are nonnegative. The former forces all coefficients $C_i$ to be zero except for $C_j$, and hence $u_m=Cx^{k}\frac{\partial}{\partial x_j}$ with $k_j=0$. In the latter $u_m=\sum_{j=1}^{N}C_jx^{m+e_j}\frac{\partial}{\partial x_j}$ with $m_j+1>0$ for each $j$.

Suppose $u_m=Cx^{k}\frac{\partial}{\partial x_j}$ with $k_j=0$. Then $\left[x_a\frac{\partial}{\partial x_b},u_m\right]=Ck_bx^{k+e_a-e_b}\frac{\partial}{\partial x_j}-\delta_{aj}Cx^{k+e_j}\frac{\partial}{\partial x_b}$. Since $1\leq a<b\leq N$ it follows that the only $u_m$ of this form annihilated by all raising operators $x_a\frac{\partial}{\partial x_b}$ is $u_m=C x_1^{n+1}\frac{\partial}{\partial x_N}$.

It remains to show that no vectors of the form $u_m=\sum_{j=1}^{N}C_jx^{m+e_j}\frac{\partial}{\partial x_j}$ with $m_j+1>0$ for each $j$, are highest weight vectors. Suppose for $a<b$ that $\left[x_a\frac{\partial}{\partial x_b},\sum_{j=1}^{N}C_jx^{m+e_j}\frac{\partial}{\partial x_j}\right]=0$. The coefficient of $\frac{\partial}{\partial x_b}$ on the right hand side is $(C_b(m_b+1)-C_a)x^{m+e_a}=0$. Letting $b=N$ and varying $a$ shows that $C_a=C_N(m_N+1)$ for $a=1,\dots,N-1$. Plugging this into the expression for the divergence of $u$ gives
\begin{equation*}
\sum_{j=1}^{N-1}C_N(m_N+1)(m_j+1)+C_N(m_N+1)=C_N(m_N+1)\left(\sum_{j=1}^{N-1}(m_j+1)+1\right),
\end{equation*}
which is zero only when $C_N=0$ since each $(m_j+1)$ was assumed to be positive. Then $C_N=0$ implies $C_a=0$ for $a=1,\dots,N-1$, and thus $u_m=0$.
\end{proof}
The action of the $P_{ab}^{(k)}$ with $|k|>1$ will be shown to define a representation of the subalgebra 
\begin{equation*}
S_N^+=\text{Span}_{\mathbb{C}}\left\{\left.S_{ab}(k)\right|a,b\in\{1,\dots,N\},k\in\mathbb{Z}_{\geq0}^N,|k|>1\right\}.
\end{equation*}
\begin{prop}\label{Prepresentation}
The map $\rho(S_{ab}(k))=P_{ab}^{(k)}\in\text{\emph{End}}(U)$ for $|k|>1$ is a finite dimensional representation of $S_N^+$ on $U$ for $N\geq2$.
\end{prop}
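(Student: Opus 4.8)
The plan is to verify directly that $\rho$ respects brackets, i.e. that $[\rho(S_{ab}(q)),\rho(S_{cd}(k))]=\rho([S_{ab}(q),S_{cd}(k)])$ for all basis elements $S_{ab}(q),S_{cd}(k)$ of $S_N^+$, which by definition are indexed by $|q|,|k|>1$. The engine is the already-established bracket for the operators $D_{ab}(r)$ together with the polynomial expansions. For $N\geq3$, Proposition \ref{Ngeq3case} gives $D_{ab}(r)=\sum_{m}\frac{r^m}{m!}P_{ab}^{(m)}$ as a genuine identity for every $r\in\mathbb{Z}^N$, so I would simply substitute these expansions into the bracket formula for $[D_{ab}(r),D_{cd}(s)]$. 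For $N=2$ I would first note that the delta-function correction only affects $r=0$, so the special bracket (\ref{bracket}) forces the polynomial identity $[P_{12}(r),P_{12}(s)]=(r_1s_2-r_2s_1)(P_{12}(r)+P_{12}(s)-P_{12}(r+s))$ to hold whenever $r,s,r+s\neq0$; since both sides are polynomials agreeing off finitely many hyperplanes, Lemma \ref{equalpolynomials} promotes this to an identity of polynomials valid for all $r,s$.

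The heart of the argument is a coefficient comparison. After substituting the expansions, both sides of the $D$-bracket become formal series in $r$ and $s$, and I would extract the coefficient of $\frac{r^q}{q!}\frac{s^k}{k!}$ for fixed $|q|,|k|>1$. On the left $[D_{ab}(r),D_{cd}(s)]=\sum_{q,k}\frac{r^q}{q!}\frac{s^k}{k!}[P_{ab}^{(q)},P_{cd}^{(k)}]$, so the coefficient is exactly $[\rho(S_{ab}(q)),\rho(S_{cd}(k))]$. On the right I would treat the six terms of the $D$-bracket separately. The two correction terms $(r_as_b-r_bs_a)D_{cd}(s)$ and $(r_cs_d-r_ds_c)D_{ab}(r)$ carry the factors $r_a,r_b$ (resp. $s_c,s_d$), so they produce only monomials of $r$-degree one (resp. $s$-degree one); since $|q|,|k|>1$ these terms contribute nothing, and this is precisely the role of the hypothesis $|q|,|k|>1$. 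For each of the four main terms, say $r_bs_cD_{ad}(r+s)$, I would expand $(r+s)^m=\sum_{p}\binom{m}{p}r^ps^{m-p}$, match $r_br^p\,s_cs^{m-p}$ against $r^qs^k$, and use $\frac{q!}{(q-e_b)!}=q_b$ and $\frac{k!}{(k-e_c)!}=k_c$ to read off the coefficient $q_bk_cP_{ad}^{(q+k-e_b-e_c)}$. Summing the four main terms yields
\[
q_bk_cP_{ad}^{(q+k-e_b-e_c)}-q_bk_dP_{ac}^{(q+k-e_b-e_d)}-q_ak_cP_{bd}^{(q+k-e_a-e_c)}+q_ak_dP_{bc}^{(q+k-e_a-e_d)},
\]
which is precisely $\rho$ applied to $[S_{ab}(q),S_{cd}(k)]$ as given by the structure constants of $S_N$; here one also observes that the output multi-indices have weight $|q|+|k|-2>1$, so they index genuine elements of $S_N^+$ and $S_N^+$ is closed under the bracket. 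Comparing the two sides gives the homomorphism property, and since $\dim U<\infty$ by (J2) this is the desired finite-dimensional representation.

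The one point demanding care — and the main obstacle — is the index-and-sign bookkeeping in the degenerate cases, most visibly when the index pairs coincide (e.g. $c=a,\ d=b$): here the terms $D_{aa},D_{bb}$ (resp. $S_{aa},S_{bb}$) vanish while $D_{ba}=-D_{ab}$ and $S_{ba}=-S_{ab}$, and one must track these sign flips to confirm that the antisymmetric combination $(q_bk_a-q_ak_b)P_{ab}^{(q+k-e_a-e_b)}$ emerging from (\ref{bracket}) agrees with the value produced by the general structure constants. I expect no conceptual difficulty beyond this, together with the density step that legitimizes working with the polynomial $P_{12}$ in place of $D_{12}$ when $N=2$; the restriction $|q|,|k|>1$ is exactly what is needed to kill the correction terms that would otherwise obstruct the assignment $S_{ab}(k)\mapsto P_{ab}^{(k)}$.
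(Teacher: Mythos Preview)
Your proposal is correct and follows essentially the same approach as the paper: substitute the polynomial expansions of $D_{ab}(r)$ into the bracket formula, expand both sides as series in $r$ and $s$, and extract the coefficient of $\frac{r^qs^k}{q!k!}$ for $|q|,|k|>1$, observing that the two ``correction'' terms contribute only in $r$-degree~$1$ (resp.\ $s$-degree~$1$) and hence drop out. The paper invokes Lemma~\ref{Ngeq2case} uniformly for $N\geq2$ (working on $r,s\neq0$) rather than splitting into $N=2$ and $N\geq3$, but your density argument via Lemma~\ref{equalpolynomials} for $N=2$ is a perfectly good substitute; your worry about the degenerate index cases $c=a,\ d=b$ is unfounded, since the structure constants of the $D$-bracket and the $S$-bracket are literally identical term by term, so the antisymmetries match automatically.
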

\begin{proof}
Using Lemma \ref{Ngeq2case} the Lie bracket of $D_{ab}(r)$ with $D_{cd}(s)$ with $r, s \neq0$ may be expressed
\begin{equation*}
[D_{ab}(r),D_{cd}(s)]=\sum_{j,k\in\mathbb{Z}_{\geq 0}^N}\frac{r^js^k}{j!k!}[P_{ab}^{(j)},P_{cd}^{(k)}],
\end{equation*}
where the left hand side may be computed
\begin{multline}\label{Pbracket}
(r_as_b-r_bs_a)D_{cd}(s)+(r_cs_d-r_ds_c)D_{ab}(r)+r_bs_cD_{ad}(r+s)\\-r_bs_dD_{ac}(r+s)
-r_as_cD_{bd}(r+s)+r_as_dD_{bc}(r+s)\\
=(r_as_b-r_bs_a)\!\sum_{k\in\mathbb{Z}_{\geq 0}^N}\frac{s^k}{k!}P_{cd}^{(k)}+(r_cs_d-r_ds_c)\!\sum_{k\in\mathbb{Z}_{\geq 0}^N}\frac{r^k}{k!}P_{ab}^{(k)}+r_bs_c\sum_{k\in\mathbb{Z}_{\geq 0}^N}\frac{(r+s)^k}{k!}P_{ad}^{(k)}\\
-r_bs_d\sum_{k\in\mathbb{Z}_{\geq 0}^N}\frac{(r+s)^k}{k!}P_{ac}^{(k)}-r_as_c\sum_{k\in\mathbb{Z}_{\geq 0}^N}\frac{(r+s)^k}{k!}P_{bd}^{(k)}+r_as_d\sum_{k\in\mathbb{Z}_{\geq 0}^N}\frac{(r+s)^k}{k!}P_{bc}^{(k)}.
\end{multline}
Thus $[P_{ab}^{(j)},P_{cd}^{(k)}]$ is obtained by extracting the coefficient of $\frac{r^js^k}{j!k!}$ in the expression above. Then for any $j,k\in\mathbb{Z}_{\geq 0}^N$ with $|j|,|k|>1$, the bracket is given by
\begin{multline}\label{Pracket:j,k>1}
[P_{ab}^{(j)},P_{cd}^{(k)}]=\\
j_bk_cP_{ad}^{(j+k-e_b-e_c)}-j_bk_dP_{ac}^{(j+k-e_b-e_d)}-j_ak_cP_{bd}^{(j+k-e_a-e_c)}+j_ak_dP_{bc}^{(j+k-e_a-e_d)}.
\end{multline}
Note that the expression on the right will differ if either $|j|\leq1$ or $|k|\leq1$. The equation above shows that $\rho(S_{ab}(k))=P_{ab}^{(k)}$ preserves the Lie bracket of $S_N^+$ and is therefore a finite dimensional representation on $U$.
\end{proof}
Since $D_{ab}(r)=-D_{ba}(r)$ it follows that $P_{ab}^{(k)}=-P_{ba}^{(k)}$ for any $k\in\mathbb{Z}_{\geq0}^N$. A linear relationship for the $P_{ab}^{(k)}$ is seen in (\ref{P_ab^krelationship}), and extracting the coefficient on $r^k$ with $k=e_b+e_c$ gives that $P_{ab}^{(e_b)}=P_{ac}^{(e_c)}$. For $N\geq3$, $P_{ab}^{(0)}=0$ and $P_{ab}^{(e_i)}=0$ for $i\neq a,b$.

Consider the Lie algebra spanned by $\left\{\left.P_{ab}^{(k)}\right|a,b\in\{1,\dots,N\},k\in\mathbb{Z}_{\geq0}^N\right\}$. As was noted above, the expression in (\ref{Pracket:j,k>1}) is valid only when $|j|,|k|>1$.  The remaining brackets are obtained by extracting the coefficient of $\frac{r^js^k}{j!k!}$ in (\ref{Pbracket}) for appropriate values of $j$ and $k$. Doing so yields that $[P_{ab}^{(j)},P_{cd}^{(k)}]=0$ when either $j$ or $k$ is zero. If $|j|>1,|k|=1$ or $|j|=1,k>1$ the terms on the right hand side of (\ref{Pbracket}) vanish using the relationship in (\ref{P_ab^krelationship}) when $N\geq3$ or cancel directly when $N=2$. When both $|j|=1$ and $|k|=1$ the right hand side of (\ref{Pbracket}) has only terms $P_{ab}^{(0)}$ (for some $a$ and $b$). In the case $N=2$, $[P_{ab}^{(e_a)},P_{ab}^{(e_b)}]=P_{ab}^{(0)}$, and $[P_{ab}^{(e_a)},P_{ab}^{(e_a)}]=0$, however when $N\geq3$, $P_{ab}^{(0)}=0$, and so $[P_{ab}^{(j)},P_{cd}^{(k)}]=0$ when $|j|=1$ and $|k|=1$.
Thus for $N\geq3$ the subset of elements $P_{ab}^{(e_i)}$ spans an abelian algebra with generators $\left\{\left.P_{12}^{(e_2)},P_{i1}^{(e_1)}\right|i=2,\dots,N\right\}$, and in the case $N=2$ there is a Heisenberg  algebra spanned by $\left\{P_{12}^{(e_1)},P_{12}^{(e_2)},P_{12}^{(0)}\right\}$.

Let $\mathfrak{a}_N=\text{Span}_{\mathbb{C}}\{C_i|i=1,\dots,N\}$ be an $N$ dimensional abelian Lie algebra, and $\mathcal{H}=\text{Span}_{\mathbb{C}}\{X,Y,Z\}$ a three dimensional Heisenberg algebra with central element $Z=[X,Y]$. For $N\geq3$ the map $\rho(C_a)=P_{ab}^{(e_b)}$ is a finite dimensional representation of $\mathfrak{a}_N$ on $U$. When $N=2$ the map $\rho(X)=P_{12}^{(e_2)},\rho(Y)=P_{21}^{(e_1)}$, and $\rho(Z)=P_{12}^{(0)}$ is a finite dimensional representation of $\mathcal{H}$ on $U$. The following theorem considers Lie algebras $S_2^+\oplus\mathcal{H}$ and $S_N^+\oplus\mathfrak{a}_N$. In either case the bracket of $\mathcal{H}$ or $\mathfrak{a}_N$ with $S_N^+$ is zero.

Since $[\rho(S_{ab}(e_a+e_b)),\rho(S_{ab}(ne_a))]=n\rho(S_{ab}(ne_a))$ for $n\geq0$, Lemma \ref{finiteeigenvals} implies that for some $k_0\geq0$, $\rho(S_{ab}(ke_a))$ acts as zero on $U$ for all $k\geq k_0$. The irreducibility in Lemma \ref{slNmodules} ensures that all of $\mathfrak{L}_k$ acts as zero. So for some $k_0\geq0$, $\mathfrak{L}_k$ acts trivially on $U$ for all $k\geq k_0$. 
\begin{thm}\label{classification}
Let $\lambda\in\mathbb{C}^N$ and let $\mathcal{J}_{\lambda}$ be the subcategory of modules in $\mathcal{J}$ supported on $\lambda+\mathbb{Z}^N$. 
\begin{enumerate}
\item[(a)]
For $N=2$ there is an equivalence of categories between the category of finite dimensional modules for $S_2^+\oplus\mathcal{H}$ and $\mathcal{J}_{\lambda}$. This equivalence maps $U$ to $A_2\otimes U$ where $U$ is a finite dimensional module for $S_2^+\oplus\mathcal{H}$. The action of $\mathcal{S}_2$ on $A_2\otimes U$ is given by $d_a(t^{s}\otimes u)=(s_a+\lambda_a)t^s\otimes u$ and for $r\neq0$,
\begin{multline}\label{N=2action}
d_{12}(r)(t^{s}\otimes u)=\left(r_2s_1-r_1s_2\right)t^{r+s}\otimes u\\+t^{r+s}\otimes\sum_{\substack{k\in\mathbb{Z}_{\geq 0}^2\\|k|>1}}\frac{r^k}{k!}\rho(S_{12}(k))u+t^{r+s}\otimes \left(r_1\rho(X)-r_2\rho(Y)+\rho(Z)\right)u.
\end{multline}
\item[(b)]
For $N\geq 3$, there is an equivalence of categories between the category of finite dimensional modules for $S_N^+\oplus\mathfrak{a}_N$ and $\mathcal{J}_{\lambda}$. This equivalence maps $U$ to $A_N\otimes U$ where $U$ is a finite dimensional module for $S_N^+\oplus\mathfrak{a}_N$. The action of $\mathcal{S}_N$ on $A_N\otimes U$ is given by $d_a(t^{s}\otimes u)=(s_a+\lambda_a)t^s\otimes u$ and
\begin{multline}\label{Ngeq3action}
d_{ab}(r)(t^{s}\otimes u)=\left(r_bs_a-r_as_b\right)t^{r+s}\otimes u\\+t^{r+s}\otimes\sum_{\substack{k\in\mathbb{Z}_{\geq 0}^N\\|k|>1}}\frac{r^k}{k!}\rho(S_{ab}(k))u+t^{r+s}\otimes \left(r_b\rho(C_a)-r_a\rho(C_b)\right)u.
\end{multline}
\end{enumerate}
\end{thm}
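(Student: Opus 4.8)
The plan is to exhibit the two functors and show they are mutually quasi-inverse. Let $G$ send a finite-dimensional $S_N^+\oplus\mathcal{H}$-module (resp. $S_N^+\oplus\mathfrak{a}_N$-module) $U$ to $A_N\otimes U$ equipped with the action (\ref{N=2action}) (resp. (\ref{Ngeq3action})), and let $F$ send $J\in\mathcal{J}_\lambda$ to its weight space $J_\lambda$ with the representation $\rho$ assembled from the operators $P_{ab}^{(k)}$ (using Proposition \ref{Prepresentation} for $S_N^+$ and the explicit $\mathcal{H}$- resp. $\mathfrak{a}_N$-structure together with $[\mathcal{H},S_N^+]=0$ resp. $[\mathfrak{a}_N,S_N^+]=0$). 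On objects $F$ is already supplied by Propositions \ref{N=2case}, \ref{Ngeq3case} and \ref{Prepresentation} and the bracket analysis preceding the theorem, and the identification $J\cong A_N\otimes J_\lambda$ of Section 2 shows $F$ and $G$ invert each other on underlying data; so the substance is to verify that $G$ really lands in $\mathcal{J}_\lambda$ and that both functors respect morphisms.

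First I would check $G(U)$ is well-defined. The sum $\sum_{|k|>1}\tfrac{r^k}{k!}\rho(S_{ab}(k))$ is finite for every finite-dimensional $U$: applying Lemma \ref{finiteeigenvals} to $[\rho(S_{ab}(e_a+e_b)),\rho(S_{ab}(ne_a))]=n\rho(S_{ab}(ne_a))$ forces $\rho(S_{ab}(ne_a))=0$ for large $n$, and the irreducibility of each $\mathfrak{L}_k$ (Lemma \ref{slNmodules}) then makes all of $\mathfrak{L}_k$ act trivially for $k\geq k_0$. Properties (J1) and (J2) are immediate from $d_a(t^s\otimes u)=(s_a+\lambda_a)t^s\otimes u$ and the free $A_N$-structure, and $A_N\otimes U$ is supported on $\lambda+\mathbb{Z}^N$; property (J3) is a one-line computation from (\ref{Ngeq3action})/(\ref{N=2action}) using $t^m(t^s\otimes u)=t^{m+s}\otimes u$ and $d_{ab}(r)(t^m)=(r_bm_a-r_am_b)t^{r+m}$.

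The crux is that (\ref{Ngeq3action})/(\ref{N=2action}) defines a Lie algebra homomorphism $\mathcal{S}_N\to\text{End}(A_N\otimes U)$. Writing $D_{ab}(r)$ for the $\text{End}(U)$-valued function in these formulas, this amounts to two things: compatibility with the defining relations among the generators of $\mathcal{S}_N$ — that is $D_{ab}(r)=-D_{ba}(r)$, $D_{aa}(r)=0$, $D_{ab}(0)=0$, and for $N\geq3$ the relation (\ref{P_ab^krelationship}) — and preservation of the bracket (the $D$-bracket of Section 3, of which (\ref{bracket}) is a special case). I would verify these by reversing the computation of Proposition \ref{Prepresentation}: expanding $D_{ab}(r)$ into homogeneous components and matching coefficients of $\tfrac{r^js^k}{j!k!}$ turns every required identity into a commutator relation among $\rho$-images. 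For $|j|,|k|>1$ these are exactly (\ref{Pracket:j,k>1}), valid because $\rho$ restricts to a representation of $S_N^+$. The low-degree identities, where the degree $-1$ translations of $S_N$ are replaced by the commuting summand $\mathcal{H}$ (resp. $\mathfrak{a}_N$), are the delicate point, and I expect this to be the main obstacle: here I would use $[\mathcal{H},S_N^+]=0$ (resp. $[\mathfrak{a}_N,S_N^+]=0$) and the vanishing of the corresponding terms in (\ref{Pbracket}) (via (\ref{P_ab^krelationship}) when $N\geq3$, by direct cancellation when $N=2$) for $|j|=1<|k|$, the Heisenberg relation (resp. abelianness) for $|j|=|k|=1$, and the facts that (\ref{P_ab^krelationship}) and $S_{ab}(k)=0$ when $k_a=k_b=0$ reflect identities already holding in $S_N$. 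For $N=2$ the delta correction $-\delta_{r,0}P_{12}^{(0)}$ is what enforces $D_{12}(0)=0$, reconciling the nonzero central value $\rho(Z)=P_{12}^{(0)}$ with the relation $d_{12}(0)=0$.

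Finally I would handle morphisms and the natural isomorphisms. A morphism $J\to J'$ in $\mathcal{J}_\lambda$ commutes with every $d_a$, hence preserves weight spaces and restricts to $J_\lambda\to J'_\lambda$; commuting with $t^{-r}d_{ab}(r)$ and extracting coefficients shows the restriction intertwines the $\rho$'s, making $F$ a functor, while $\psi\mapsto\mathrm{id}\otimes\psi$ makes $G$ a functor. The canonical map $A_N\otimes J_\lambda\to J$, $t^s\otimes u\mapsto t^su$, is an $\mathcal{S}_N$-isomorphism by the very definition $D_{ab}(r)=t^{-r}d_{ab}(r)$ together with (\ref{SNactiononJ}), giving a natural isomorphism $GF\cong\mathrm{id}$; and $(A_N\otimes U)_\lambda=1\otimes U\cong U$ recovers $\rho$ unchanged, giving $FG\cong\mathrm{id}$. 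These two natural isomorphisms establish the claimed equivalence.
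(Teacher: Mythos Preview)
Your proposal is correct and follows essentially the same approach as the paper: both directions rest on the polynomial-action results (Propositions \ref{N=2case}, \ref{Ngeq3case}) together with Proposition \ref{Prepresentation}, and the converse is obtained by reversing the coefficient-extraction in (\ref{Pbracket}) to recover the $\mathcal{S}_N$-commutator relations from the $S_N^+\oplus\mathcal{H}$ (resp.\ $S_N^+\oplus\mathfrak{a}_N$) bracket. If anything your outline is more complete than the paper's own proof, which handles only the object-level correspondence and leaves functoriality, the natural isomorphisms $GF\cong\mathrm{id}$ and $FG\cong\mathrm{id}$, and the compatibility with the linear relations among the $d_{ab}(r)$ for $N\geq3$ implicit; your explicit treatment of these points fills gaps the paper glosses over.
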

\begin{proof}
Let $J$ be a module in $\mathcal{J}_\lambda$. As was noted at the end of Section 2, the module $J$ may be identified 
with $A_N\otimes U$ where $U$ is the weight space $J_\lambda$. Then (J1) with (J3) yields $d_a(t^{s}\otimes u)=(s_a+\lambda_a)t^s\otimes u$ for $u\in U$.

Section 3 showed that the action of $d_{ab}(r)\in\mathcal{S}_N$ on $J$ is determined by its restriction to $U$ and is given by an $\text{End}(U)$-valued polynomial in $r$. When $r\neq0$,
\begin{equation*}
d_{ab}(r)(t^s\otimes u)=(r_bs_a-s_br_a)t^{r+s}\otimes u + t^{r+s}\otimes \sum_{k\in\mathbb{Z}_{\geq 0}^N}\frac{r^k}{k!}P^{(k)}_{ab}u,
\end{equation*}
and for $r=0$, $d_{ab}(r)=0$ and thus acts trivially. Proposition \ref{Prepresentation} and the remarks that follow show that $U$ is a finite dimensional $S_2^+\oplus\mathcal{H}$-module when $N=2$ and $U$ is a finite dimensional $S_N^+\oplus\mathfrak{a}_N$-module when $N\geq3$. The actions in (\ref{N=2action}) and (\ref{Ngeq3action}) follow.

Conversely let $U$ be a finite dimensional module for $S_2^+\oplus\mathcal{H}$. Identify the elements of $S_2^+\oplus\mathcal{H}$ with the $P_{12}^{(k)}$ as above and let 
\begin{equation*}
D_{12}(r)=\sum_{k\in\mathbb{Z}_{\geq 0}^2}\frac{r^k}{k!}P_{12}^{(k)}.
\end{equation*}
This sum is finite due to the discussion just before the theorem. The Lie bracket of $S_2^+\oplus\mathcal{H}$ yields the commutator relations for the $D_{12}(r)$ operators via equation (\ref{Pbracket}). The Lie bracket of the $D_{12}(r)$ along with the action of $d_{12}(r)$ given in (\ref{SNactiononJ}) recovers the commutator relations in $\mathcal{S}_2$. Thus $A_2\otimes U$ is an $\mathcal{S}_2$-module.

The fact that a finite dimensional $S_N^+\oplus\mathfrak{a}_N$-module $U$ yields a finite dimensional module $A_N\otimes U$ for $\mathcal{S}_N$ follows in a similar fashion for the $N\geq 3$ case.
\end{proof}
\section{Irreducible Tensor Modules}
This section considers simple modules from category $\mathcal{J}$. Note that in a finite dimensional irreducible representation of the Heisenberg algebra, the central element must act by zero. Hence $P_{12}^{(0)}=0$ in the case $N=2$ and so the Heisenberg algebra $\mathcal{H}$ used above gets replaced with the two dimensional abelian algebra $\mathfrak{a}_2$. A further simplification found in irreducible modules is that the action of $S_N^+$ becomes the action of $\mathfrak{sl}_N$, its degree zero component from the grading in Lemma \ref{slNmodules}. The following will be used to show this (cf. \cite{CS} Lemma 2.4  and \cite{FH} Lemma 9.13). 
\begin{lem}\label{trivialaction}
Let $\mathfrak{g}$ be a finite dimensional Lie algebra over $\mathbb{C}$ with solvable radical $\text{\emph{Rad}}(\mathfrak{g})$. Then $[\mathfrak{g},\text{\emph{Rad}}(\mathfrak{g})]$ acts trivially on any finite dimensional irreducible $\mathfrak{g}$-module.
\end{lem}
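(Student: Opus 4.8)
The plan is to reduce the statement to the classical invariance lemma that underlies Lie's theorem. Let $\rho:\mathfrak{g}\to\mathfrak{gl}(V)$ be a finite dimensional irreducible representation and write $R=\text{Rad}(\mathfrak{g})$. Since $R$ is solvable and $\mathbb{C}$ is algebraically closed of characteristic zero, Lie's theorem produces a common eigenvector for the operators $\rho(x)$, $x\in R$. This yields a linear functional $\lambda:R\to\mathbb{C}$ together with the nonzero weight space
\begin{equation*}
V_\lambda=\{v\in V\mid \rho(x)v=\lambda(x)v \text{ for all } x\in R\}.
\end{equation*}
The goal is to show that $V_\lambda$ is a $\mathfrak{g}$-submodule, so that irreducibility forces $V_\lambda=V$, and then to extract the vanishing of $[\mathfrak{g},R]$ from this.

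The central step, which I expect to be the main obstacle, is the invariance lemma: $\lambda$ vanishes on $[\mathfrak{g},R]$. To prove it, fix $g\in\mathfrak{g}$ and a nonzero $w\in V_\lambda$, and form the subspace $W$ spanned by $w,\rho(g)w,\rho(g)^2w,\dots$, letting $n$ be minimal so that $w,\dots,\rho(g)^{n-1}w$ form a basis of $W$; then $W$ is $\rho(g)$-invariant. Using that $R$ is an ideal (so $[x,g]\in R$ for $x\in R$), an induction on $i$ shows that $\rho(x)\rho(g)^iw\equiv\lambda(x)\rho(g)^iw$ modulo the span of $w,\dots,\rho(g)^{i-1}w$. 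Hence each $\rho(x)$ with $x\in R$ preserves $W$ and acts there by an upper triangular matrix with every diagonal entry equal to $\lambda(x)$, giving $\text{tr}_W\rho(x)=n\lambda(x)$. Applying this with $x$ replaced by $[x,g]\in R$, and observing that $\rho([x,g])=[\rho(x),\rho(g)]$ restricts to a commutator of operators on the finite dimensional space $W$ and therefore has trace zero, we obtain $n\lambda([x,g])=0$. Since $n\neq0$ in $\mathbb{C}$, this forces $\lambda([x,g])=0$. The characteristic-zero hypothesis is essential precisely here.

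With the invariance lemma in hand, for $g\in\mathfrak{g}$ and $w\in V_\lambda$ one computes $\rho(x)\rho(g)w=\rho(g)\rho(x)w+\rho([x,g])w=\lambda(x)\rho(g)w+\lambda([x,g])w=\lambda(x)\rho(g)w$, so $\rho(g)w\in V_\lambda$. Thus $V_\lambda$ is a nonzero $\mathfrak{g}$-submodule, and irreducibility gives $V_\lambda=V$; equivalently, every $x\in R$ acts on $V$ as the scalar $\lambda(x)$. To finish, take any $y=[g,x]\in[\mathfrak{g},R]$; since $R$ is an ideal, $y\in R$, so $\rho(y)=\lambda(y)\,\text{id}_V$. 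On the other hand $\rho(y)=[\rho(g),\rho(x)]$ has trace zero, whence $\lambda(y)\dim V=0$ and $\lambda(y)=0$. Therefore $\rho(y)=0$, and $[\mathfrak{g},\text{Rad}(\mathfrak{g})]$ acts trivially on $V$.
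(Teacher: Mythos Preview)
Your argument is correct. The paper itself does not supply a proof of this lemma; it is stated with citations to Cagliero--Szechtman and to Fulton--Harris (Lemma~9.13) and then used as a black box. Your proof is the standard one underlying Lie's theorem: produce a weight $\lambda$ for the solvable radical, use the filtration of the cyclic $\rho(g)$-subspace $W$ to show each $\rho(x)|_W$ is upper triangular with constant diagonal $\lambda(x)$, and conclude $\lambda([x,g])=0$ from the vanishing trace of a commutator on $W$. This is essentially the argument in Fulton--Harris that the paper points to, so you have filled in exactly what the paper omits. One minor remark: your final paragraph re-derives $\lambda([g,x])=0$ via a trace argument on all of $V$, but this is already contained in your invariance lemma, so the last step is redundant (though harmless).
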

As noted above, there exists $k_0$ such that $\mathfrak{L}_k$ acts as zero for all $k\geq k_0$, and so the ideal $I=\bigoplus_{k\geq k_0}\mathfrak{L}_k$ must also act trivially. To apply Lemma \ref{trivialaction} consider the finite dimensional Lie algebra $\mathfrak{g}=S_N^+/I\oplus\mathfrak{a}_N$ and its action on $U$. Since $[\mathfrak{L}_n,\mathfrak{L}_m]\subset\mathfrak{L}_{n+m}$ it follows that $\text{Rad}(\mathfrak{g})=\left(\bigoplus_{n>0}\mathfrak{L}_n\right)/I\oplus\mathfrak{a}_N$ and hence $[\mathfrak{g},\text{Rad}(\mathfrak{g})]=\left(\bigoplus_{n>0}\mathfrak{L}_n\right)/I$ acts trivially. Therefore the ideal $\bigoplus_{n>0}\mathfrak{L}_n$ of $S_N^+\oplus\mathfrak{a}_N$ acts trivially on a simple module from category $\mathcal{J}$.
\begin{thm}
Let $\lambda\in\mathbb{C}^N$ and let $\mathcal{J}_{\lambda}$ be the subcategory of modules in $\mathcal{J}$ supported on $\lambda+\mathbb{Z}^N$. For $N\geq 2$ there is a one-to-one correspondence between the finite dimensional irreducible modules for $\mathfrak{sl}_N\oplus\mathfrak{a}_N$ and the irreducible modules in $\mathcal{J}_{\lambda}$. This correspondence maps a finite dimensional irreducible module $V$  for $\mathfrak{sl}_N\oplus\mathfrak{a}_N$ to $A_N\otimes V$. The action of $\mathcal{S}_N$ on $A_N\otimes V$ is given by $d_a(t^{s}\otimes u)=(s_a+\lambda_a)t^s\otimes u$ and
\begin{multline*}
d_{ab}(r)(t^{s}\otimes u)=\left(r_b(s_a+\mu_a)-r_a(s_b+\mu_b)\right)t^{r+s}\otimes u\\+t^{r+s}\otimes\sum_{\substack{i=1\\i\neq a}}^Nr_ir_b\varphi(E_{ia})u-\sum_{\substack{i=1\\i\neq b}}^Nr_ir_a\varphi(E_{ib})u+r_ar_b\varphi(E_{aa}-E_{bb})u,
\end{multline*}
where $\mu_a,\mu_b\in\mathbb{C}$ are the action of $C_a,C_b\in\mathfrak{a}_N$, and $\varphi$ is a representation of $\mathfrak{sl}_N$.
\end{thm}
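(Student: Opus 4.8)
The plan is to read off the classification of irreducible objects from the equivalence of categories already established in Theorem~\ref{classification}, using the two facts recorded immediately before the present statement: that the ideal $\bigoplus_{n>0}\mathfrak{L}_n$ of $S_N^+$ acts trivially on every simple module, and that $\mathfrak{L}_0\cong\mathfrak{sl}_N$ by Lemma~\ref{slNmodules}. Because an equivalence of categories restricts to a bijection between isomorphism classes of simple objects, the irreducible modules in $\mathcal{J}_\lambda$ correspond exactly to the finite dimensional irreducible modules for $S_N^+\oplus\mathfrak{a}_N$ (for $N\ge3$) or for $S_2^+\oplus\mathcal{H}$ (for $N=2$), with the correspondence given on objects by $U\mapsto A_N\otimes U$. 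So the first task is to identify these simple modules with simple $\mathfrak{sl}_N\oplus\mathfrak{a}_N$-modules.

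First I would reduce the acting algebra. For $N=2$ a finite dimensional irreducible representation of the Heisenberg algebra $\mathcal{H}$ has its central element acting by zero, so $\rho(Z)=P_{12}^{(0)}=0$ and the $\mathcal{H}$-action factors through the two dimensional abelian quotient $\mathfrak{a}_2$. For all $N\ge2$, the triviality of $\bigoplus_{n>0}\mathfrak{L}_n$ (a consequence of Lemma~\ref{trivialaction}, as explained above) means that the $S_N^+$-action factors through $S_N^+/\bigl(\bigoplus_{n>0}\mathfrak{L}_n\bigr)\cong\mathfrak{L}_0\cong\mathfrak{sl}_N$. Thus every simple module for $S_N^+\oplus\mathfrak{a}_N$ (resp.\ $S_2^+\oplus\mathcal{H}$) is pulled back along the canonical surjection from a module for $\mathfrak{sl}_N\oplus\mathfrak{a}_N$, and conversely any $\mathfrak{sl}_N\oplus\mathfrak{a}_N$-module pulls back to one whose submodules are exactly the $S_N^+\oplus\mathfrak{a}_N$-submodules; hence simplicity is preserved in both directions and the two families of simple modules are in bijection. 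Since $\mathfrak{a}_N$ is central in $\mathfrak{sl}_N\oplus\mathfrak{a}_N$, Schur's Lemma forces each $C_i$ to act on an irreducible module by a scalar $\mu_i\in\mathbb{C}$, while $\mathfrak{sl}_N$ acts by an irreducible representation $\varphi$. Composing the two bijections yields the asserted one-to-one correspondence between finite dimensional irreducible $\mathfrak{sl}_N\oplus\mathfrak{a}_N$-modules $V$ and irreducible modules $A_N\otimes V$ in $\mathcal{J}_\lambda$.

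It then remains to specialize the formula (\ref{Ngeq3action}) (and (\ref{N=2action}) when $N=2$) to this case. Since $\mathfrak{L}_n$ acts as zero for every $n>0$, only the degree-two terms $|k|=2$ survive in $\sum_{|k|>1}\tfrac{r^k}{k!}\rho(S_{ab}(k))$, and the abelian contribution $r_b\rho(C_a)-r_a\rho(C_b)$ becomes $r_b\mu_a-r_a\mu_b$, which merges with the leading term into $r_b(s_a+\mu_a)-r_a(s_b+\mu_b)$. For the surviving terms I would use the identifications supplied by Lemma~\ref{slNmodules}: under $x_i\tfrac{\partial}{\partial x_j}\mapsto E_{ij}$ one has $S_{ab}(e_a+e_b)=E_{aa}-E_{bb}$, $S_{ab}(2e_a)=-2E_{ab}$, $S_{ab}(2e_b)=2E_{ba}$, and, for $c\neq a,b$, $S_{ab}(e_a+e_c)=-E_{cb}$ and $S_{ab}(e_b+e_c)=E_{ca}$. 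Substituting these together with $(2e_a)!=(2e_b)!=2$ and $(e_i+e_j)!=1$, and then collecting the resulting degree-two monomials in $r$ according to the index they share, regroups the sum into the two single-index sums over $\varphi(E_{ia})$ and $\varphi(E_{ib})$ plus the diagonal term $r_ar_b\varphi(E_{aa}-E_{bb})$, giving the stated action of $d_{ab}(r)$.

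The conceptual steps are clean: the equivalence restricts to simple objects, the solvable radical acts trivially by Lemma~\ref{trivialaction}, and Schur's Lemma diagonalizes the central $\mathfrak{a}_N$. The main obstacle is the bookkeeping in the last paragraph --- correctly passing each $S_{ab}(k)$ with $|k|=2$ to its $\mathfrak{sl}_N$ image (keeping track of the signs, the factors of $2$ coming from $(2e_a)!$, and precisely which variable $r_a$ or $r_b$ multiplies each grouped sum) and verifying that the collected polynomial is exactly the expression in the statement. I would also check that the derivation is uniform for $N=2$: there the cross terms indexed by $c\neq a,b$ are vacuous, $\mathcal{H}$ has already been replaced by $\mathfrak{a}_2$, and the $\mathfrak{sl}_2$ formula specializes correctly.
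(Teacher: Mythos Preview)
Your proposal is correct and follows essentially the same route as the paper: invoke the equivalence of Theorem~\ref{classification} on simple objects, use Lemma~\ref{trivialaction} and the preceding discussion to kill $\bigoplus_{n>0}\mathfrak{L}_n$ (and, for $N=2$, the center of $\mathcal{H}$), identify the quotient with $\mathfrak{sl}_N$ via Lemma~\ref{slNmodules}, and apply Schur's Lemma to $\mathfrak{a}_N$. The paper's proof is terse and leaves the passage from \eqref{Ngeq3action} to the explicit $\mathfrak{sl}_N$ formula implicit; you spell out the $|k|=2$ identifications and the regrouping, which is exactly the bookkeeping the paper omits.
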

\begin{proof}
The correspondence is given in Theorem \ref{classification}. By Lemma \ref{trivialaction} and the discussion above, the ideal $I=
\text{Span}\left\{\left.S_{ab}(j)\right|a,b\in\{1,\dots,N\},|j|>2\right\}$ acts trivially on $V$. Then $S_N^+/I\cong\mathfrak{sl}_N$ and so the action of $\mathcal{L}_0$ in (\ref{Ngeq3action}) is represented by elements of $\mathfrak{sl}_N$ as seen in Lemma \ref{slNmodules}. By Schur's Lemma, elements of $\mathfrak{a}_N$ act by scalars and so $\rho(C_a)$ and $\rho(C_b)$ become  $\mu_a$ and $\mu_b$ respectively in (\ref{Ngeq3action}).
\end{proof}
\section{Acknowledgements}
The first author gratefully acknowledges funding from the Natural Sciences and Engineering Research Council of Canada.



\begin{thebibliography}{9}
\bibitem{BB} S.~Berman, Y.~Billig, Irreducible representations for toroidal Lie algebras, Journal of Algebra, {\bf 221} (1999), 188-231. doi:10.1006/jabr.1999.7961.
\bibitem{B} Y.~Billig, Jet modules, Canadian Journal of Mathematics, {\bf 59} (2007), 712-729. doi: 10.4153/CJM-2007-031-2.
\bibitem{BZ} Y.~Billig, K.~Zhao, Weight modules over exp-polynomial Lie algebras, Journal of Pure and Applied Algebra, {\bf 191} (2004), 23-42. doi: 10.1016/j.jpaa.2003.12.004.
\bibitem{CS} L.~Cagliero, F.~Szechtman, Jordan-Chevalley decomposition in finite dimensional Lie algebras, Proceedings of the American Mathematical Society, {\bf 139} (2011), 3909-3913. doi: 10.1090/S0002-9939-2011-10827-X.
\bibitem{FH} W.~Fulton, J.~Harris, Representation Theory, Graduate Texts in Mathematics, {\bf 129}. Readings in Mathematics. Springer-Verlag, New York, 1991. doi: 10.1007/978-1-4612-0979-9.
\bibitem{JL} J.-J.~Jiang, W.-Q.~Lin, Partial classification of cuspidal simple modules for Virasoro-like algebra, Journal of Algebra, {\bf 464} (2016), 266-278. doi:10.1016/j.jalgebra.2016.06.022.
\bibitem{R1} S.~Eswara Rao, Irreducible representations of the Lie-algebra of the diffeomorphisms of a $d$-dimensional torus, Journal of Algebra, {\bf 182} (1996), 401-421. doi:10.1006/jabr.1996.0177.
\bibitem{R2} S.~Eswara Rao, Partial classification of modules for Lie algebra of diffeomorphisms of d-dimensional torus, Journal of Mathematical Physics, {\bf 45} (2004), 3322-3333. doi: 10.1063/1.1769104.
\bibitem{T} J.~Talboom, Irreducible modules for the Lie algebra of divergence zero vector fields on a torus, Communications in Algebra, {\bf 44} (2016), 1795-1808. doi: 10.1080/00927872.2015.1027396.
\end{thebibliography}
\end{document}